\documentclass[a4paper,
fontsize=11pt,%
oneside,%
numbers=enddot]{scrartcl}
\KOMAoptions{DIV=12}    
\usepackage[utf8]{inputenc}
\usepackage[T1]{fontenc}
\usepackage{graphicx}
\usepackage{textcomp}
\usepackage[fleqn]{amsmath}
\usepackage{amsthm}
\usepackage{amssymb}
\usepackage{amsfonts}
\usepackage{soul}
\usepackage{pifont}        
\usepackage{libertine}
\usepackage[libertine,
slantedGreek,
nosymbolsc,
nonewtxmathopt,
subscriptcorrection]{newtxmath}
\usepackage[scaled=0.95,varqu,varl]{inconsolata}
\usepackage[scr=boondox]{mathalpha}   
\usepackage{euscript}   
\usepackage{leftindex}
\allowdisplaybreaks[1]  
\numberwithin{equation}{section}    
\usepackage[svgnames,hyperref]{xcolor}
\definecolor{orng}{HTML}{F35400}
\definecolor{bleu}{HTML}{BCE6F2}
\definecolor{dblue}{HTML}{0455BF}
\definecolor{dgreen}{HTML}{02724A}
\definecolor{dgreen2}{HTML}{025951}
\definecolor{dred}{HTML}{D90404}
\definecolor{dviolet}{HTML}{42208C}
\definecolor{labelkey}{HTML}{025951}
\definecolor{refkey}{HTML}{025951}
\definecolor{refkey}{rgb}{0,0.6,0.0}
\definecolor{Brown}{rgb}{0.45,0.0,0.05}
\definecolor{dgreen}{rgb}{0.00,0.49,0.00}
\definecolor{dblue}{rgb}{0,0.18,0.75}
\definecolor{lblue}{rgb}{0,0.7,0.75}
\definecolor{dviolet}{HTML}{9400D3}
\definecolor{pblue}{rgb}{0.1176,0.5647,1}
\definecolor{nblue}{rgb}{0.2,0.3,1}
\definecolor{pgreen}{rgb}{0.1961,0.8039,0.1961}
\definecolor{ngreen}{rgb}{0.0,0.6,0.3}
\definecolor{pred}{rgb}{1.0,0.2706,0.0}
\definecolor{magenta}{HTML}{ff00ff}
\definecolor{hotmagenta}{rgb}{1.0, 0.11, 0.81}
\definecolor{dorng}{rgb}{0.91,0.41,0.17}
\definecolor{dgray}{rgb}{0.41,0.41,0.41}
\usepackage{tikz,tkz-euclide,pgfplots}
\usetikzlibrary{arrows,decorations}
\addtokomafont{section}{\centering}

\usepackage{enumitem}
\setlist{itemsep=-2.0pt}

\usepackage{upref}  
\usepackage{hyperref}
\hypersetup{colorlinks=true,
linktocpage=true,
linkcolor=dblue,
citecolor=dgreen,
urlcolor=dred,
pdfencoding=auto,
hypertexnames=false}
\makeatletter
\g@addto@macro\th@plain{
\thm@headfont{\bfseries\sffamily}
\thm@notefont{}}
\g@addto@macro\th@definition{
\thm@headfont{\bfseries\sffamily}
\thm@notefont{}}
\g@addto@macro\th@remark{
\thm@headfont{\bfseries\sffamily}
\thm@notefont{}}
\makeatother
\theoremstyle{plain}
\newtheorem{theorem}{Theorem}[section]
\newtheorem{proposition}[theorem]{Proposition}
\newtheorem{corollary}[theorem]{Corollary}
\newtheorem{lemma}[theorem]{Lemma}

\theoremstyle{definition}

\newtheorem{problem}[theorem]{Problem}
\newtheorem{assumption}[theorem]{Assumption}
\theoremstyle{remark}

\newtheorem{algorithm}[theorem]{Algorithm}

\usepackage{epsfig}
\usepackage[usenames,dvipsnames]{pstricks}
\usepackage{pstricks-add}
\usepackage{pst-grad}
\usepackage{pst-plot} 
\usepackage{pst-node} 
\usepackage{pst-eucl} 
\usepackage{pst-coil} 
\usepackage[extdef=true]{delimset}
\DeclareMathDelimiterSet{\scal}[2]{
\selectdelim[l]<{#1}
\mathpunct{}\selectdelim[p]|
{#2}\selectdelim[r]>}
\DeclareMathDelimiterSet{\EC}[2]{
\mathsf{E}\selectdelim[l]({#1}
\mathpunct{}\selectdelim[p]|
{#2}\selectdelim[r])}

\newcommand{\menge}[2]{\bigl\{{#1}\mid{#2}\bigr\}} 
\DeclareMathDelimiterSet{\Menge}[2]{\selectdelim[l]\{
{#1}\selectdelim[m]|{#2}\selectdelim[r]\}}

\makeatletter
\def\upintkern@{\mkern-7mu\mathchoice{\mkern-3.5mu}{}{}{}}
\def\upintdots@{\mathchoice{\mkern-4mu\@cdots\mkern-4mu}%
{{\cdotp}\mkern1.5mu{\cdotp}\mkern1.5mu{\cdotp}}%
{{\cdotp}\mkern1mu{\cdotp}\mkern1mu{\cdotp}}%
{{\cdotp}\mkern1mu{\cdotp}\mkern1mu{\cdotp}}}
\makeatother
\DeclareFontFamily{OMX}{mdbch}{}
\DeclareFontShape{OMX}{mdbch}{m}{n}{ <->s * [0.8]  mdbchr7v }{}
\DeclareFontShape{OMX}{mdbch}{b}{n}{ <->s * [0.8]  mdbchb7v }{}
\DeclareFontShape{OMX}{mdbch}{bx}{n}{<->ssub * mdbch/b/n}{}
\DeclareSymbolFont{uplargesymbols}{OMX}{mdbch}{m}{n}
\SetSymbolFont{uplargesymbols}{bold}{OMX}{mdbch}{b}{n}
\DeclareMathSymbol{\upintop}{\mathop}{uplargesymbols}{82}
\DeclareMathSymbol{\upointop}{\mathop}{uplargesymbols}{"48}
\makeatletter
\renewcommand{\int}{\DOTSI\upintop\ilimits@}
\renewcommand{\oint}{\DOTSI\upointop\ilimits@}
\makeatother

\newcommand{\RR}{\mathbb{R}}
\newcommand{\NN}{\mathbb{N}}
\newcommand{\XX}{\EuScript{X}}

\newcommand{\CS}{\mathsf{C}}
\newcommand{\HS}{\mathsf{H}}

\newcommand{\ZS}{\mathsf{Z}}
\newcommand{\zS}{\mathsf{z}}
\newcommand{\nS}{{\mathsf{n}}}
\newcommand{\nnn}{\mathsf{n}\in\mathbb{N}}

\newcommand{\iS}{\mathsf{i}}
\newcommand{\dS}{\mathsf{d}}
\newcommand{\jS}{\mathsf{j}}
\newcommand{\kS}{\mathsf{k}}
\newcommand{\xS}{\mathsf{x}}
\newcommand{\yS}{\mathsf{y}}
\newcommand{\fS}{\mathsf{f}}
\newcommand{\gS}{\mathsf{g}}
\newcommand{\sS}{\mathsf{s}}
\newcommand{\lS}{\mathsf{l}}

\newcommand{\BE}{\EuScript{B}}
\newcommand{\FE}{\EuScript{F}}

\newcommand{\pinf}{{+}\infty}
\newcommand{\minf}{{-}\infty}
\newcommand{\zeroun}{\intv[o]{0}{1}}

\newcommand{\RX}{\intv[l]0{\minf}{\pinf}}
\newcommand{\RP}{\intv[r]0{0}{\pinf}}
\newcommand{\RPP}{\intv[o]0{0}{\pinf}}
\newcommand{\RPX}{\intv0{0}{\pinf}}

\newcommand{\emp}{\varnothing}

\newcommand{\Int}{\displaystyle\int}
 
\newcommand{\minimize}[2]{\underset{\substack{{#1}}}
{\operatorname{minimize}}\;\;#2}

\newcommand{\pushfwd}%
{\ensuremath{\mbox{\Large$\,\triangleright\,$}}}

\DeclareMathOperator{\Argmin}{Argmin}

\newcommand{\Id}{\mathsf{Id}}

\newcommand{\moyo}[2]{\leftindex[I]^{#2}{#1}}
\DeclareMathOperator{\card}{card}

\DeclareMathOperator{\dom}{dom}

\DeclareMathOperator{\prox}{prox}
\DeclareMathOperator{\proj}{proj}

\newcommand{\EE}{\mathsf{E}}
\newcommand{\PP}{\mathsf{P}}

\renewcommand{\leq}{\leqslant}
\renewcommand{\geq}{\geqslant}

\newcommand{\Pas}{\text{\normalfont$\PP$-a.s.}}

\renewenvironment{abstract}{%
\vspace*{-0.50cm}
\small
\quotation%
\noindent%
{\normalfont\bfseries\sffamily
\nobreak\abstractname\ }%
}{%
\endquotation%
\medskip
}
\renewcommand{\abstractname}{Abstract.}
\newcommand\keywordsname{Keywords.}
\newenvironment{keywords}
{\renewcommand\abstractname{\keywordsname}\begin{abstract}}
{\end{abstract}}
\usepackage[auth-sc]{authblk}
\newcommand{\email}[1]{\href{mailto:#1}{\nolinkurl{#1}}}
\renewcommand*\Affilfont{\normalfont\normalsize}
\newcommand\affilcr{\protect\\ \protect\Affilfont}
\makeatletter
\renewcommand\AB@affilsepx{\protect\\[0.5em]}
\makeatother

\author[1]{Javier I. Madariaga}
\affil[1]{North Carolina State University
\affilcr
Department of Mathematics
\affilcr
Raleigh, NC 27695, USA
\affilcr
\email{jimadari@ncsu.edu}
}

\begin{document}

\title{Convergence of the Iterates of the\\ 
Stochastic Proximal Gradient Method\thanks{Contact author:
J. I. Madariaga. Email: \email{jimadari@ncsu.edu}.
This work was supported by the National
Science Foundation under grant DMS-2513409.
}}

\date{~}

\maketitle

\begin{abstract}
We propose a novel study of the stochastic proximal gradient method
for minimizing the sum of two convex functions, one of which is
smooth. Under suitable assumptions and without requiring any
boundedness or control of the variance of the random variables, we
derive the almost sure convergence and the convergence in the mean
of the iterates to a solution of the minimization
problem. The results are applied to classification and convex
feasibility problems.
\end{abstract}

\begin{keywords}
proximal-gradient algorithm,
stochastic algorithm,
stochastic programming.
\end{keywords}

\newpage

\section{Introduction}
\label{sec:1}

Let $\HS$ be a Euclidean space and let $(\upOmega,\FE,\PP)$ be a
complete probability space. We consider the minimization of 
the sum of two functions in $\upGamma_{0}(\HS)$, one of which is
smooth. As shown in \cite{Acnu24}, this framework models many
problems in applied mathematics and engineering.

\begin{problem}
\label{prob:0}
Let $\upbeta\in\RPP$, let $\fS\in\upGamma_{0}(\HS)$, and let
$\gS\colon\HS\to\RR$ be convex, differentiable, and such that
$\nabla\gS$ is $\upbeta$-Lipschitzian, with
$\Argmin(\fS+\gS)\neq\emp$. The task is to
\begin{equation}
\label{e:prob0}
\minimize{\xS\in\HS}{\fS(\xS)+\gS(\xS)}.
\end{equation}
\end{problem}

A standard approach to solve this problem is to
use the forward-backward algorithm, also called the
proximal-gradient algorithm in this context: Given $\xS_{0}\in\HS$ 
and a sequence $(\upgamma_{\nS})_{\nnn}$ in 
$\RPP$ such that
$0<\inf\upgamma_{\nS}\leq\sup\upgamma_{\nS}<2/\upbeta$, iterate 
\begin{equation}
\label{e:pgm}
(\forall\nnn)\quad
\xS_{\nS+1}=\prox_{\upgamma_{\nS}\mathsf{f}}
\brk1{\xS_{\nS}-\upgamma_{\nS}\nabla\gS(\xS_{\nS})}.
\end{equation}
Then $(\xS_{\nS})_{\nnn}$ is guaranteed to converge to a
solution to Problem~\ref{prob:0} \cite{Smms05,Tsen91}. The implementation
of this
method requires both the proximity operator of $\fS$ and the
gradient of $\gS$ to be numerically tractable. However, evaluating
these operators could be computationally expensive or even
impossible. This paper investigates the following
version of Problem~\ref{prob:0}.

\begin{problem}
\label{prob:1}
In the context of Problem~\ref{prob:0}, let
$(\mathsf{K},\EuScript{K})$ be a measurable space. For every
$\kS\in\mathsf{K}$, let $\fS_{\kS}\in\upGamma_{0}(\HS)$ and let
$\gS_{\kS}\colon\HS\to\RR$ be convex and differentiable. Further,
let $k\colon(\upOmega,\FE,\PP)\to(\mathsf{K},\EuScript{K})$ be a
random variable such that 
\begin{equation}
\label{e:p1}
\begin{cases}
(\forall\xS\in\dom\fS)&\fS(\xS)
=\Int_{\upOmega}\fS_{k(\upomega)}(\xS)\PP(d\upomega);\\[3mm]
(\forall\xS\in\HS)&\gS(\xS)
=\Int_{\upOmega}\gS_{k(\upomega)}(\xS)\PP(d\upomega).
\end{cases}
\end{equation}
\end{problem}

The contribution of this paper is
to provide new results on the asymptotic behavior of the
following stochastic version of the proximal-gradient method for
solving Problem~\ref{prob:1}.

\begin{algorithm}
\label{algo:1}
In the setting of Problem~\ref{prob:1}, let
$(\upgamma_{\nS})_{\nnn}$ be a sequence in $\RPP$ and let
$x_{0}\in L^2(\upOmega,\FE,\PP;\HS)$. Iterate
\begin{equation}
\label{e:algo1}
\hskip -1mm 
\begin{array}{l}
\text{for}\;\nS=0,1,\ldots\\
\left\lfloor
\begin{array}{l}
k_{\nS}\;\textup{is a copy of $k$ and is independent of}\;
\upsigma(x_{0},\ldots,x_{\nS})\\
x_{\nS+1}=\prox_{\upgamma_{\nS}\mathsf{f}_{k_{\nS}^{}}}
\brk1{x_{\nS}-\upgamma_{\nS}\nabla\gS_{k_{\nS}}(x_{\nS})}.
\end{array}
\right.\\
\end{array}
\end{equation}
\end{algorithm}

Algorithm~\ref{algo:1} can be interpreted as the inexact version
\begin{equation}
(\forall\nnn)\quad 
x_{\nS+1}=\prox_{\upgamma_{\nS}\mathsf{f}}
\brk1{x_{\nS}-\upgamma_{\nS}\brk{\nabla\gS(x_{\nS})+b_{\nS}}}
+a_{\nS},
\end{equation}
of \eqref{e:pgm}, in which
\begin{equation}
(\forall\nnn)\quad
\begin{cases}
a_{\nS}&=\prox_{\upgamma_{\nS}\mathsf{f}_{k_{\nS}^{}}}
\brk1{x_{\nS}-\upgamma_{\nS}\nabla\gS_{k_{\nS}}(x_{\nS})}
-\prox_{\upgamma_{\nS}\mathsf{f}}
\brk1{x_{\nS}-\upgamma_{\nS}\nabla\gS_{k_{\nS}}(x_{\nS})};\\
b_{\nS}&=\nabla\gS_{k_{\nS}}(x_{\nS})-\nabla\gS(x_{\nS}).
\end{cases}
\end{equation}
Thus, we can establish the asymptotic behavior of
Algorithm~\ref{algo:1} through some stochastic inexact version of
the forward-backward algorithm; see, e.g., \cite{Sadd25,Comb16}.
However, the general frameworks established in these works do not
make use of the structure of the functions in \eqref{e:p1} and,
instead, they rely on strong conditions on the sequences
$(a_{\nS})_{\nnn}$ and $(b_{\nS})_{\nnn}$ such as convergence to
zero and boundedness of their variance. 

Algorithm~\ref{algo:1} has been studied in the case of a
deterministic function $\fS$, i.e., when only $\nabla\gS$ is
randomly approximated \cite{Atch17,Brav24,Cui22,Fort19,Rosa20,%
Xiao14}, and
in the case when $\gS=0$ \cite{Asi19,Eise22,Iidu17,Trao24}.
However, Algorithm~\ref{algo:1} in its full generality has been
less explored. Existing analyses either do not prove almost
sure convergence \cite{Herm23,Patr21} or rely on restrictive
assumptions, such as the uniform boundedness of all gradients and
subgradients \cite{Bert11}, or the existence of solutions with
subgradients in $L^{2\mathsf{p}}$ \cite{Bian16}. In the present 
paper, we establish almost-sure and $L^1$ convergence of the
sequence generated by Algorithm~\ref{algo:1} under much weaker
assumptions.
 
The rest of the paper is organized as follows. Section~\ref{sec:2}
introduces the general notation and the preliminary results used
throughout the manuscript. Section~\ref{sec:3} establishes the
convergence of Algorithm~\ref{algo:1} to a solution of
Problem~\ref{prob:1} under mild conditions. Section~\ref{sec:4}
proposes an application to mixed-loss classification problems
and Section~\ref{sec:5} to inconsistent convex feasibility
problems.

\section{Notation}
\label{sec:2}

Throughout, $\HS$ is a Euclidean space with identity operator
$\Id$, scalar product $\scal{\cdot}{\cdot}$, and associated norm
$\|\cdot\|$. Let
$\CS$ be a nonempty closed convex subset of $\HS$. Then
$\iota_{\CS}$ denotes the indicator function of $\CS$,
$\proj_{\CS}$ the projection operator onto $\CS$, 
$\mathsf{N}_{\CS}$ the normal cone to $\CS$,
and
$\dS_{\CS}\colon\xS\mapsto\inf_{\yS\in\CS}\norm{\xS-\yS}$ the
distance function of $\CS$. The class of lower semicontinuous
convex functions $\mathsf{h}\colon\HS\to\RX$ such that
$\dom\mathsf{h}=\menge{\xS\in\HS} {\mathsf{h}(\xS)<\pinf} \neq\emp$
is denoted by $\upGamma_{0}(\HS)$. Let
$\mathsf{h}\in\upGamma_{0}(\HS)$. The subdifferential of
$\mathsf{h}$ at $\xS\in\HS$ is the set
$\partial\mathsf{h}(\xS)=\menge{\mathsf{u}\in\HS\!}
{\!(\forall\zS\in\HS)\; \scal{\zS-\xS}{\mathsf{u}}+\mathsf{h}(\xS)
\!\leq\! \mathsf{h}(\zS)}$, the element of minimal norm in
$\partial\mathsf{h}(\xS)$ is $\moyo{\partial\mathsf{h}(\xS)}{0}$,
and the proximity operator of $\mathsf{h}$ is 
\begin{equation} 
\label{e:dprox}
\prox_{\mathsf{h}}\colon\HS\to\HS\colon
\xS\mapsto\underset{\zS\in\HS}
{\text{argmin}}\;\brk2{\mathsf{h}(\zS)+ \dfrac{1}{2}\|\xS-\zS\|^2}.
\end{equation}

Let $(\upXi,\EuScript{G})$ be a measurable space. A $\upXi$-valued
random variable is a measurable mapping
$x\colon(\upOmega,\FE,\PP)\to(\upXi,\EuScript{G})$. 
Given $x\colon\upOmega\to\upXi$ and $\mathsf{S}\in\EuScript{G}$, we
set $[x\in\mathsf{S}]=\menge{\upomega\in\upOmega}
{x(\upomega)\in\mathsf{S}}$. Let $x$ and $y$ be random variables 
from $(\upOmega,\FE,\PP)$ to $(\upXi,\EuScript{G})$. Then $y$ is a
copy of $x$ if, for every $\mathsf{S}\in\EuScript{G}$, 
$\PP([x\in\mathsf{S}])=\PP([y\in\mathsf{S}])$.
The Borel $\upsigma$-algebra of $\HS$ is denoted by $\BE$.
An $\HS$-valued random variable is a measurable 
mapping $x\colon(\upOmega,\FE)\to(\HS,\BE)$. 
Let $\mathsf{p}\in\left[1,\pinf\right[$. Then
$L^\mathsf{p}(\upOmega,\FE,\PP;\HS)$ denotes the space of 
equivalence classes of $\Pas$ equal $\HS$-valued random variables
$x\colon(\upOmega,\FE,\PP)\to(\HS,\BE)$ such that 
$\EE\|x\|^\mathsf{p}<\pinf$. Endowed with the norm 
\begin{equation}
\|\cdot\|_{L^\mathsf{p}(\upOmega,\FE,\PP;\HS)}\colon
x\mapsto\EE^{1/\mathsf{p}}\|x\|^\mathsf{p}
=\brk3{\int_{\upOmega}\|x(\upomega)\|^{\mathsf{p}}
\PP(d\upomega)}^{1/\mathsf{p}},
\end{equation}
$L^\mathsf{p}(\upOmega,\FE,\PP;\HS)$ is a
real Banach space. Further,
\begin{equation}
(\forall\mathsf{S}\in\BE)\quad
L^\mathsf{p}(\upOmega,\FE,\PP;\mathsf{S})=\Menge1{x\in
L^\mathsf{p}(\upOmega,\FE,\PP;\HS)}{x\in\mathsf{S}\;\Pas}.
\end{equation}
The $\upsigma$-algebra generated by a family $\upPhi$ of random 
variables is denoted by $\upsigma(\upPhi)$. 

The reader is referred to \cite{Livre1} for background on convex
analysis, and to \cite{Shir16} for
background on probability. 

\section{Convergence analysis}
\label{sec:3}

We propose to study the convergence of Algorithm~\ref{algo:1} to 
solutions to Problem~\ref{prob:1} under the following assumptions.

\begin{assumption}
\label{as:1}
In the context of Problem~\ref{prob:1}, there exists
$\bar{\zS}\in\Argmin(\fS+\gS)$ and, for every
$\kS\in\mathsf{K}$,
$\sS_{\kS}\in\partial\fS_{\kS}(\bar{\zS})$, such that
\begin{equation}
\int_{\upOmega}\brk1{\sS_{k(\upomega)}
+\nabla\gS_{k(\upomega)}(\bar{\zS})}\PP(d\upomega)
={0}
\;\;\text{and}\;\;
\int_{\upOmega}\norm1{\sS_{k(\upomega)}
+\nabla\gS_{k(\upomega)}(\bar{\zS})}^2\PP(d\upomega)
<\pinf.
\end{equation}
\end{assumption}

\begin{assumption}
\label{as:2}
In the context of Problem~\ref{prob:1}, 
\begin{equation}
(\forall\kS\in\mathsf{K})\quad
\dom\partial\fS_{\kS}=\dom\partial\fS.
\end{equation}
In addition, there exists an increasing function
$\uppsi\colon\RP\to\RP$ such that
\begin{equation}
\label{e:as3}
(\forall\xS\in\dom\partial\fS)\quad
\int_{\upOmega}\norm1{\moyo{\partial\fS_{k(\upomega)}(\xS)}{0}
+\nabla\gS_{k(\upomega)}(\xS)}^2\PP(d\upomega)
\leq\uppsi\brk1{\norm{\xS}}.
\end{equation}
\end{assumption}

Assumption~\ref{as:1} is significantly weaker than the standard
assumptions in the literature, which typically require uniform
boundedness of all measurable selections of subgradients at every
solution to Problem~\ref{prob:1} \cite{Asi19,Bert11}, or the
existence of subgradients in $L^{2\mathsf{p}}$ \cite{Bian16}.
Assumption~\ref{as:2} ensures that the sequence generated by
Algorithm~\ref{algo:1} remains within $\dom\fS$. In addition, it
allows for arbitrary subgradient growth, since $\uppsi$ can be any
increasing function. Assumption~\ref{as:2} is weaker than those in
the literature, which require controlling every
measurable selection of subgradients \cite{Asi19} or restricting
the function $\uppsi$ to a particular form \cite{Bian16}.

We present two technical lemmas.

\begin{lemma}
\label{l:1}
Let $\boldsymbol{x}=(x_1,\ldots,x_{\mathsf{N}})$ be an
$\HS^\mathsf{N}$-valued random variable, let
$(\mathsf{K},\EuScript{K})$ be a measurable space, and suppose that
the random variable
$k\colon(\upOmega,\FE,\PP)\to(\mathsf{K},\EuScript{K})$ is
independent of $\upsigma(\boldsymbol{x})$. Let
$\mathsf{h}\colon(\mathsf{K}\times\HS,\EuScript{K}\otimes\BE)
\to\RP$ be measurable and define
$\upphi\colon\HS\to\RPX\colon\xS\mapsto\EE\mathsf{h}(k,\xS)$. 
Then, for $\PP$-almost every $\upomega'\in\upOmega$, 
\begin{equation}
\label{e:cex}
\EC1{\mathsf{h}(k,x_1)}{\upsigma(\boldsymbol{x})}(\upomega')
=\int_{\upOmega}\mathsf{h}\brk1{k(\upomega),x_1(\upomega')}
\PP(d\upomega)
=\upphi\brk1{x_1(\upomega')}.
\end{equation}
\end{lemma}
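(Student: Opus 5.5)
This is the standard "freezing" (substitution) lemma for conditional expectations, and I would prove it with a monotone class argument. The second equality in \eqref{e:cex} is immediate from the definition of $\upphi$, since $k$ has the same law under $\PP$ whatever $\upomega'$ is. So the content is the first equality. Set $\upPhi(\xS)=\int_{\upOmega}\mathsf{h}(k(\upomega),\xS)\PP(d\upomega)=\upphi(\xS)\in\RPX$. I need two facts:
\begin{itemize}
\item[(i)] $\upphi$ is $\BE$-measurable, so that $\upphi(x_1)$ is $\upsigma(\boldsymbol{x})$-measurable;
\item[(ii)] for every $\mathsf{A}\in\upsigma(\boldsymbol{x})$, $\EE(1_{\mathsf{A}}\mathsf{h}(k,x_1))=\EE(1_{\mathsf{A}}\upphi(x_1))$.
\end{itemize}
Conditional expectations of nonnegative (possibly non-integrable) variables are defined through this characterization, so (i) and (ii) give the claim $\PP$-a.s.

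Both facts come from Tonelli's theorem on the product of laws. Let $\upmu$ be the law of $k$ on $(\mathsf{K},\EuScript{K})$ and $\upnu$ the law of $(x_1,1_{\mathsf{A}})$ on $\HS\times\{0,1\}$. Here $\mathsf{A}=[\boldsymbol{x}\in\mathsf{B}]$ for some Borel set $\mathsf{B}\subset\HS^{\mathsf{N}}$. Independence of $k$ and $\upsigma(\boldsymbol{x})$ gives that the joint law of $(k,x_1,1_{\mathsf{A}})$ is $\upmu\otimes\upnu$. For (i), $\upphi(\xS)=\int_{\mathsf{K}}\mathsf{h}(\kS,\xS)\upmu(d\kS)$, and Tonelli for the nonnegative $\EuScript{K}\otimes\BE$-measurable function $\mathsf{h}$ shows that this section integral is $\BE$-measurable. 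For (ii), I apply the change of variables formula and then Tonelli:
\begin{equation*}
\EE\brk1{1_{\mathsf{A}}\mathsf{h}(k,x_1)}
=\int_{\HS\times\{0,1\}}\int_{\mathsf{K}}t\,\mathsf{h}(\kS,\xS)\,\upmu(d\kS)\,\upnu(d\xS,dt)
=\int_{\HS\times\{0,1\}}t\,\upphi(\xS)\,\upnu(d\xS,dt)
=\EE\brk1{1_{\mathsf{A}}\upphi(x_1)}.
\end{equation*}
Everything is nonnegative, so no integrability assumptions are needed and all values may equal $\pinf$.

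The main obstacle is the careful justification of measurability and product-law identification. A priori, $\mathsf{h}$ is only jointly measurable and could take infinite expectations. If one prefers to avoid that bookkeeping, a monotone class argument works instead. First verify the identity for $\mathsf{h}=1_{\mathsf{E}\times\mathsf{F}}$ with $\mathsf{E}\in\EuScript{K}$ and $\mathsf{F}\in\BE$, where it reduces to $\PP([k\in\mathsf{E}])1_{\mathsf{F}}(x_1)$ by independence. Then extend to indicators of all sets in $\EuScript{K}\otimes\BE$ by Dynkin's $\uppi$-$\uplambda$ theorem, to simple functions by linearity, and to nonnegative measurable $\mathsf{h}$ by monotone convergence, which holds for conditional expectations too. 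Finally, the middle expression in \eqref{e:cex} equals $\upphi(x_1(\upomega'))$ pointwise, which completes the proof.
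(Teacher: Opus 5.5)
Your proof is correct and takes essentially the paper's route. The paper simply defers to the proof of \cite[Lemma~2.8]{Moco26}, with Tonelli's theorem in place of Fubini's, and that is what you do in (i)--(ii): independence makes the joint law of $k$ and the $\upsigma(\boldsymbol{x})$-measurable variables a product measure, and Tonelli's theorem then handles the nonnegative, possibly non-integrable $\mathsf{h}$. Your monotone class alternative is also valid but not needed.
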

\begin{proof}
The proof is analogous to that of \cite[Lemma~2.8]{Moco26},
replacing Fubini's theorem with Tonelli's theorem. 
\end{proof}

The following fact is used in several papers without proof.

\begin{lemma}
\label{l:2}
In the context of Algorithm~\ref{algo:1}, suppose that
Assumptions~\ref{as:1} and \ref{as:2} are in force.
Let $\nnn\smallsetminus\{0\}$, let $\zS\in\dom\fS$, and set
$\XX_{\nS}=\upsigma(x_{0},\ldots,x_{\nS})$. Then, with probability 
$1$,
\begin{enumerate}
\item
\label{l:2i}
$\EC1{\mathsf{f}_{k_{\nS}}(\zS)}{\XX_{\nS}}=\mathsf{f}(\zS)$,\;\;%
and\;\;
$\EC1{\mathsf{g}_{k_{\nS}}(\zS)}{\XX_{\nS}}=\mathsf{g}(\zS)$.
\item
\label{l:2ii}
$\EC1{\mathsf{f}_{k_{\nS}}(x_{\nS})}{\XX_{\nS}}=\mathsf{f}(x_{\nS})
$\;\;and\;\;
$\EC1{\mathsf{g}_{k_{\nS}}(x_{\nS})}{\XX_{\nS}}
=\mathsf{g}(x_{\nS})$.
\item
\label{l:2iii}
$\EC1{\nabla\mathsf{g}_{k_{\nS}}(\zS)}{\XX_{\nS}}
=\nabla\mathsf{g}(\zS)$\;\;and\;\;
$\EC1{\nabla\mathsf{g}_{k_{\nS}}(x_{\nS})}{\XX_{\nS}}
=\nabla\mathsf{g}(x_{\nS})$.
\end{enumerate}
\end{lemma}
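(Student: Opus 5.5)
The plan is to derive all three items from Lemma~\ref{l:1}, applied with $\boldsymbol{x}=(x_0,\ldots,x_\nS)$ and $k$ replaced by $k_\nS$. This is legitimate because $k_\nS$ is a copy of $k$ and is independent of $\XX_\nS=\upsigma(\boldsymbol{x})$, so the function $\upphi$ of Lemma~\ref{l:1} computed with $k_\nS$ coincides with the one computed with $k$. For \ref{l:2i} the point $\zS$ is deterministic, and the statement reduces to computing unconditional expectations. For \ref{l:2ii} we take $x_1:=x_\nS$ in Lemma~\ref{l:1}.

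Lemma~\ref{l:1} only covers nonnegative integrands, so we first split signs. We write $\fS_\kS(\xS)=\fS_\kS(\xS)^+-\fS_\kS(\xS)^-$ and apply Lemma~\ref{l:1} separately to $\mathsf{h}^{\pm}\colon(\kS,\xS)\mapsto\fS_\kS(\xS)^{\pm}$, and likewise for $\gS_\kS$. Two things must be checked here.
\begin{enumerate}
\item Joint measurability of $(\kS,\xS)\mapsto\fS_\kS(\xS)$. This is implicit in \eqref{e:p1} together with the measurability of the iteration, and we take it as part of the standing setting; it holds if $\fS_\kS$ is a normal integrand.
\item Finiteness of at least one of $\EE\,\mathsf{h}^{\pm}(k,\xS)$ at the relevant points, so that subtracting the two conditional identities is allowed.
\end{enumerate}
For \ref{l:2ii}, the difference $\upphi^+(x_\nS)-\upphi^-(x_\nS)$ equals $\fS(x_\nS)$ only if $x_\nS\in\dom\fS$ $\Pas$. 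This is where $\nS\geq1$ enters. Since $x_\nS$ is the output of $\prox_{\upgamma_{\nS-1}\fS_{k_{\nS-1}}}$, it lies in $\dom\partial\fS_{k_{\nS-1}}=\dom\partial\fS\subset\dom\fS$ by Assumption~\ref{as:2}. Then \eqref{e:p1} gives $\upphi^+-\upphi^-=\fS$ on $\dom\fS$, and $\upphi^+-\upphi^-=\gS$ on all of $\HS$, with the integrals finite.

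For \ref{l:2iii}, we first prove the unconditional identity $\int_{\upOmega}\nabla\gS_{k(\upomega)}(\xS)\PP(d\upomega)=\nabla\gS(\xS)$ for every $\xS\in\HS$. Fix a direction $\mathsf{u}$. By convexity, the quotient $t\mapsto(\gS_{\kS}(\xS+t\mathsf{u})-\gS_{\kS}(\xS))/t$ is nondecreasing in $t>0$ and decreases to $\scal{\nabla\gS_\kS(\xS)}{\mathsf{u}}$ as $t\downarrow0$. It is bounded above by the integrable function $\gS_\kS(\xS+\mathsf{u})-\gS_\kS(\xS)$ for $t\leq1$, and below by $\gS_\kS(\xS)-\gS_\kS(\xS-\mathsf{u})$. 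Both bounds are integrable because $\gS_{k}$ is integrable at every point by \eqref{e:p1}. Dominated (or monotone) convergence then gives
\begin{equation*}
\EE\scal{\nabla\gS_k(\xS)}{\mathsf{u}}=\gS'(\xS;\mathsf{u})=\scal{\nabla\gS(\xS)}{\mathsf{u}},
\end{equation*}
and integrability of $\nabla\gS_k(\xS)$ comes along the way. Measurability of $\kS\mapsto\nabla\gS_\kS(\xS)$ follows from its being a pointwise limit of difference quotients. Fixing an orthonormal basis $(\mathsf{e}_\iS)$ of $\HS$, we then apply Lemma~\ref{l:1} to the positive and negative parts of $(\kS,\xS)\mapsto\scal{\nabla\gS_\kS(\xS)}{\mathsf{e}_\iS}$. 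This yields the conditional statement for $\zS$ and for $x_\nS$ componentwise, and hence for the vectors.

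I expect the main obstacle to be the integrability bookkeeping rather than the conditioning itself. One must ensure that the sign-split integrals are finite at the random point $x_\nS(\upomega')$ for almost every $\upomega'$, which reduces to $x_\nS\in\dom\fS$ $\Pas$. One must also ensure that the gradient expectation is well defined, which is handled by the monotone difference-quotient argument above.
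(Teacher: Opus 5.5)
Your proposal is correct, but it takes a different route from the paper in items (ii) and (iii). For (ii), the paper does not split signs. It subtracts the affine minorant given by the subgradients $\sS_{\kS}$ of Assumption~\ref{as:1}, so the remainder $\fS_{k_{\nS}}(x_{\nS})-\fS_{k_{\nS}}(\bar{\zS})+\scal{\bar{\zS}-x_{\nS}}{\sS_{k_{\nS}}}$ is nonnegative and Lemma~\ref{l:1} applies to it as stated. The affine part is then handled by (i) and independence, and $\gS$ is treated similarly. For (iii), the paper differentiates under the integral by dominated convergence, citing Assumption~\ref{as:2} for local boundedness of $\EE\norm{\nabla\gS_{k}(\cdot)}^2$.

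Your positive/negative split needs only that both halves of the integrals in \eqref{e:p1} be finite. This is automatic on $\dom\fS$ for $\fS$ and on $\HS$ for $\gS$, once combined with $x_{\nS}\in\dom\partial\fS\subset\dom\fS$. You correctly identify this inclusion as the place where $\nS\geq1$ and the domain condition of Assumption~\ref{as:2} enter, a point the paper leaves implicit in the lemma. Your convexity sandwich in (iii) uses nothing beyond \eqref{e:p1}, and it yields $\nabla\gS_{k}(\xS)\in L^1$ for every $\xS$ as a byproduct. This integrability is exactly what the paper's decomposition tacitly requires: Assumption~\ref{as:1} controls only $\sS_{k}+\nabla\gS_{k}(\bar{\zS})$, so integrability of $\sS_{k}$ hinges on that of $\nabla\gS_{k}(\bar{\zS})$. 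Your argument also sidesteps the fact that Assumption~\ref{as:2} bounds $\moyo{\partial\fS_{k}(\xS)}{0}+\nabla\gS_{k}(\xS)$ rather than $\nabla\gS_{k}(\xS)$ alone, and only pointwise in $\xS$. Such a bound does not by itself furnish a dominating function for the difference quotients. In exchange, the paper's route stays literally within Lemma~\ref{l:1} and reuses the same subgradient selection that drives \eqref{e:104}.

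Two bookkeeping remarks on your version. First, Lemma~\ref{l:1} is stated for $\RP$-valued $\mathsf{h}$, whereas $\fS_{\kS}(\xS)^{+}$ may equal $\pinf$ off $\dom\fS_{\kS}$. This is harmless, since Tonelli's theorem covers $[0,\pinf]$-valued integrands. Second, Lemma~\ref{l:1} needs joint measurability of $(\kS,\xS)\mapsto\scal{\nabla\gS_{\kS}(\xS)}{\mathsf{e}_{\iS}}$, not only measurability in $\kS$. Your difference-quotient argument also provides this once $(\kS,\xS)\mapsto\gS_{\kS}(\xS)$ is jointly measurable.
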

\begin{proof}
\ref{l:2i}: This follows from \eqref{e:p1} and the fact that
$k_{\nS}$ is a copy of $k$.

\ref{l:2ii}: We note that
\begin{equation}
\fS_{k_{\nS}}(x_{\nS})
=\fS_{k_{\nS}}(x_{\nS})-\fS_{k_{\nS}}(\bar{\zS})
+\scal{\bar{\zS}-x_{\nS}}{\sS_{k_{\nS}}}
+\fS_{k_{\nS}}(\bar{\zS})
-\scal{\bar{\zS}-x_{\nS}}{\sS_{k_{\nS}}}\;\;\Pas
\end{equation}
It follows from the definition of the subdifferential that
$\fS_{k_{\nS}}(x_{\nS})-\fS_{k_{\nS}}(\bar{\zS})
+\scal{\bar{\zS}-x_{\nS}}{\sS_{k_{\nS}}}\geq 0\;\Pas$
Therefore, by Lemma~\ref{l:1}, \ref{l:2ii}, and 
Assumption~\ref{as:1}, we get
\begin{equation}
\EC1{\fS_{k_{\nS}}(x_{\nS})}{\XX_{\nS}}
=\fS(x_{\nS})-\fS(\bar{\zS})
+\scal1{\bar{\zS}-x_{\nS}}{-\nabla\gS(\bar{\zS})}
+\fS(\bar{\zS})-\scal1{\bar{\zS}-x_{\nS}}{-\nabla\gS(\bar{\zS})}
=\fS(x_{\nS})\;\;\Pas
\end{equation}
Similarly, we deduce 
$\EC1{\mathsf{g}_{k_{\nS}}(x_{\nS})}{\XX_{\nS}}
=\mathsf{g}(x_{\nS})\;\Pas$

\ref{l:2iii}: From Assumption~\ref{as:2},
$\EE\norm{\nabla\gS_{k}(\cdot)}^2$ is locally bounded. Thus, the
conclusion follows from \ref{l:2ii} and the dominated convergence
theorem.
\end{proof}

We now show the almost sure convergence and $L^1$ of the iterates
of Algorithm~\ref{algo:1}.

\begin{theorem}
\label{t:1}
In the setting of Problem~\ref{prob:1}, suppose that
Assumptions~\ref{as:1} and \ref{as:2} are in force, and let 
$(x_{\nS})_{\nnn}$ be the sequence generated by
Algorithm~\ref{algo:1}. In addition,
suppose that $\sum_{\nnn}\upgamma_{\nS}=\pinf$ and
$\sum_{\nnn}\upgamma_{\nS}^2<\pinf$.
Then the following hold:
\begin{enumerate}
\item
\label{t:1i}
$(x_{\nS})_{\nnn}$ is bounded $\Pas$ in $\HS$ and bounded in
$L^2(\upOmega,\FE,\PP;\HS)$. 
\item
\label{t:1ii}
$\varliminf(\fS+\gS)(x_{\nS})=\inf(\fS+\gS)(\HS)\;\Pas$
\item
\label{t:1iii}
$(x_{\nS})_{\nnn}$ converges $\Pas$ to a random variable 
$x\in L^2(\upOmega,\FE,\PP;\Argmin(\fS+\gS))$.
\item
\label{t:1iv}
$\nabla\gS(x_{\nS})\to\nabla\gS(\bar{\zS})\;\Pas$
\item
\label{t:1v}
Suppose that there exists $\upxi\in\RPP$ such that 
$\uppsi=\upxi(1+\abs{\cdot}^2)$. Then $(x_{\nS})_{\nnn}$ converges
in $L^1(\upOmega,\FE,\PP;\HS)$ to $x$. 
\end{enumerate}
\end{theorem}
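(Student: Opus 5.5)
The argument is a stochastic quasi-Fejér analysis relative to $\bar{\zS}$ and, more generally, to arbitrary points $\zS\in\Argmin(\fS+\gS)$, closed with the Robbins--Siegmund lemma. Fix $\nS$ and write $y_{\nS}=x_{\nS}-\upgamma_{\nS}\nabla\gS_{k_{\nS}}(x_{\nS})$. The prox characterization gives $(y_{\nS}-x_{\nS+1})/\upgamma_{\nS}\in\partial\fS_{k_{\nS}}(x_{\nS+1})$. Hence, for any $\zS\in\dom\fS$,
\begin{equation*}
\norm{x_{\nS+1}-\zS}^2\leq\norm{x_{\nS}-\zS}^2-\norm{x_{\nS+1}-x_{\nS}}^2
+2\upgamma_{\nS}\brk1{\fS_{k_{\nS}}(\zS)-\fS_{k_{\nS}}(x_{\nS+1})}
+2\upgamma_{\nS}\scal{\zS-x_{\nS+1}}{\nabla\gS_{k_{\nS}}(x_{\nS})}.
\end{equation*}
The obstacle is that the terms involving $x_{\nS+1}$ are not $\XX_{\nS}$-measurable.

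To handle them, I would rewrite $\fS_{k_{\nS}}(x_{\nS+1})\geq\fS_{k_{\nS}}(x_{\nS})+\scal{x_{\nS+1}-x_{\nS}}{u_{\nS}}$ with $u_{\nS}=\moyo{\partial\fS_{k_{\nS}}(x_{\nS})}{0}$. This is legitimate because, by Assumption~\ref{as:2}, $x_{\nS}\in\dom\partial\fS=\dom\partial\fS_{k_{\nS}}$ a.s.; the iterates lie in $\dom\partial\fS_{k_{\nS-1}}$, which equals $\dom\partial\fS$, and $x_0$ may be handled after one step. I would then split
\[
\scal{\zS-x_{\nS+1}}{\nabla\gS_{k_{\nS}}(x_{\nS})}
=\scal{\zS-x_{\nS}}{\nabla\gS_{k_{\nS}}(x_{\nS})}
+\scal{x_{\nS}-x_{\nS+1}}{\nabla\gS_{k_{\nS}}(x_{\nS})}.
\]
The cross terms collect into $2\upgamma_{\nS}\scal{x_{\nS}-x_{\nS+1}}{u_{\nS}+\nabla\gS_{k_{\nS}}(x_{\nS})}$. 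Young's inequality bounds this by $\norm{x_{\nS+1}-x_{\nS}}^2+\upgamma_{\nS}^2\norm{u_{\nS}+\nabla\gS_{k_{\nS}}(x_{\nS})}^2$, which absorbs the negative term.

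Taking $\EC{\cdot}{\XX_{\nS}}$ and using Lemma~\ref{l:2}, Lemma~\ref{l:1}, and convexity of $\gS$ gives
\[
\EC{\norm{x_{\nS+1}-\zS}^2}{\XX_{\nS}}\leq\norm{x_{\nS}-\zS}^2-2\upgamma_{\nS}\brk1{(\fS+\gS)(x_{\nS})-(\fS+\gS)(\zS)}+\upgamma_{\nS}^2\uppsi(\norm{x_{\nS}}).
\]
The difficulty is that $\uppsi$ is arbitrary, so $\uppsi(\norm{x_{\nS}})$ is not a summable perturbation unless $x_{\nS}$ is bounded. This is where Assumption~\ref{as:1} must be used instead. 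I would redo the estimate with $\zS=\bar{\zS}$ and replace the Young step with a comparison against $\sS_{k_{\nS}}+\nabla\gS_{k_{\nS}}(\bar{\zS})$. Monotonicity of $\partial\fS_{k}$ and cocoercivity-type inequalities for $\nabla\gS_{k}$ convert the error into $\upgamma_{\nS}^2\norm{\sS_{k_{\nS}}+\nabla\gS_{k_{\nS}}(\bar{\zS})}^2$, whose expectation is a finite constant, plus terms absorbed for small $\upgamma_{\nS}$. Concretely, I would write
\[
x_{\nS+1}=\bar{\zS}+\brk1{\prox_{\upgamma_{\nS}\fS_{k_{\nS}}}(y_{\nS})-\prox_{\upgamma_{\nS}\fS_{k_{\nS}}}(\bar{\zS}-\upgamma_{\nS}w_{\nS})}
\]
with $w_{\nS}=\sS_{k_{\nS}}+\nabla\gS_{k_{\nS}}(\bar{\zS})$, and then use firm nonexpansiveness of $\prox$. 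Expanding gives a mean-zero term (by Assumption~\ref{as:1}) and a term of order $\upgamma_{\nS}^2\norm{w_{\nS}}^2$. This yields a Robbins--Siegmund inequality with summable perturbation $\upgamma_{\nS}^2\EE\norm{w}^2$.

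From that inequality, $\norm{x_{\nS}-\bar{\zS}}$ converges a.s., $\sup_{\nS}\EE\norm{x_{\nS}-\bar{\zS}}^2<\pinf$, and $\sum\upgamma_{\nS}\brk1{(\fS+\gS)(x_{\nS})-\min}<\pinf$ a.s. These give \ref{t:1i}. Since $\sum\upgamma_{\nS}=\pinf$, they also give \ref{t:1ii}.

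Once boundedness is known, the first inequality holds for every $\zS\in\Argmin(\fS+\gS)$, with $\uppsi(\norm{x_{\nS}})$ now bounded on the a.s. bounded trajectories. I would localize this with stopping at level sets $[\sup\norm{x_{\nS}}\leq\mathsf{M}]$. Then $\norm{x_{\nS}-\zS}$ converges a.s. for all $\zS$ in a countable dense subset of $\Argmin(\fS+\gS)$, hence for all of it. By \ref{t:1ii}, some subsequence has cluster points in $\Argmin(\fS+\gS)$ by lower semicontinuity. The standard Opial argument in finite dimensions then gives \ref{t:1iii}, and Fatou together with \ref{t:1i} puts the limit in $L^2$.

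Item \ref{t:1iv} follows because $\nabla\gS$ is constant on $\Argmin(\fS+\gS)$ (a standard fact for convex smooth $\gS$) and continuous.

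For \ref{t:1v}, I would prove uniform integrability of $(x_{\nS})$; the $L^2$ bound from \ref{t:1i} already suffices, so Vitali's theorem gives $L^1$ convergence. The quadratic $\uppsi$ is presumably needed because the paper's $L^2$ bound route goes through the first inequality, in which case Gronwall-type control of $\EE\norm{x_{\nS}}^2$ with $\sum\upgamma_{\nS}^2<\pinf$ gives the bound.

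The main obstacle is the step giving the summable perturbation via $\bar{\zS}$ without variance control at $x_{\nS}$.
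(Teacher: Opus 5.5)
Your architecture for (i)--(ii) matches the paper's. A $\uppsi$-free estimate relative to $\bar{\zS}$, obtained from nonexpansiveness of $\prox_{\upgamma_{\nS}\fS_{k_{\nS}}}$ and Assumption~\ref{as:1}, gives boundedness; only afterwards is the $\uppsi$-dependent estimate relative to $\Argmin(\fS+\gS)$ usable. Three corrections are needed.

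(a) The comparison point must be $\bar{\zS}+\upgamma_{\nS}\sS_{k_{\nS}}=\bar{\zS}-\upgamma_{\nS}\nabla\gS_{k_{\nS}}(\bar{\zS})+\upgamma_{\nS}w_{\nS}$, which the prox maps to $\bar{\zS}$ by \eqref{e:103}. In general $\prox_{\upgamma_{\nS}\fS_{k_{\nS}}}(\bar{\zS}-\upgamma_{\nS}w_{\nS})\neq\bar{\zS}$, so your displayed decomposition of $x_{\nS+1}$ is false as written.

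(b) The expansion leaves $\upgamma_{\nS}^2\norm{\nabla\gS_{k_{\nS}}(x_{\nS})+\sS_{k_{\nS}}}^2$, not just $\upgamma_{\nS}^2\norm{w_{\nS}}^2$. Absorbing the part $\norm{\nabla\gS_{k_{\nS}}(x_{\nS})-\nabla\gS_{k_{\nS}}(\bar{\zS})}^2$ by cocoercivity requires every $\nabla\gS_{\kS}$ to be $\upbeta$-Lipschitzian. That hypothesis is not in Problem~\ref{prob:1}, but the paper relies on the same implicit hypothesis in \eqref{e:104} and in the descent-lemma step of (ii), so just state it.

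(c) The $\bar{\zS}$-estimate yields $\sum_{\nnn}\upgamma_{\nS}\norm{\nabla\gS(x_{\nS})-\nabla\gS(\bar{\zS})}^2<\pinf$ a.s., as in \eqref{e:106}. It does not yield summability of $\upgamma_{\nS}\brk1{(\fS+\gS)(x_{\nS})-\inf(\fS+\gS)(\HS)}$. That summability, and hence (ii), comes from your first inequality once (i) is known. No stopping or localization is needed there: $\sup_{\nS}\uppsi(\norm{x_{\nS}})\leq\uppsi(\sup_{\nS}\norm{x_{\nS}})<\pinf$ a.s. So $\upgamma_{\nS}^2\uppsi(\norm{x_{\nS}})$ is an adapted, a.s. summable perturbation, which the Robbins--Siegmund lemma (in the form \cite[Proposition~2.3]{Siop15}) accepts directly. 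This is exactly the paper's ordering.

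Elsewhere your route genuinely differs, and to good effect.

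Your derivation of the $\uppsi$-inequality is cleaner than \eqref{e:109}--\eqref{e:110}. You linearize at $x_{\nS}$, apply one Young step to the combined cross term, and use convexity of $\gS$ only after conditioning. This needs no descent lemma for $\gS_{k_{\nS}}$ and produces no $\max\{0,\upgamma_{\nS}\upbeta-1\}$ term.

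For (iii), you run the Opial argument directly: convergence of $\norm{x_{\nS}-\zS}$ on a countable dense subset of $\Argmin(\fS+\gS)$, plus a cluster point in that set. The paper instead recasts the iteration as an instance of \cite[Algorithm~3.4]{Moco26} and invokes \cite[Theorem~3.6(i)(d)]{Moco26}. The content is the same; yours is self-contained.

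For (iv), you use constancy of $\nabla\gS$ on $\Argmin(\fS+\gS)$, which follows from monotonicity of $\partial\fS$ and cocoercivity of $\nabla\gS$. This replaces the paper's $\varliminf$ argument through \eqref{e:106+}.

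For (v), Vitali's theorem is correct and strictly better than the paper's argument. The sequence $(\norm{x_{\nS}-x})_{\nnn}$ is bounded in $L^2$ by (i) and (iii), hence uniformly integrable, and it tends to $0$ a.s. So the quadratic form of $\uppsi$ is superfluous. Your guess about why the paper needs it is off, though. The paper's $L^2$ bound \eqref{e:107} comes from the $\bar{\zS}$-estimate and never involves $\uppsi$. The quadratic $\uppsi$ is used only to obtain $\sum_{\nnn}\EE\varepsilon_{\nS}<\pinf$ in order to apply \cite[Theorem~3.6(ii)(d)]{Moco26}.
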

\begin{proof}
\ref{t:1i}: Let $\nnn$ and set
$\XX_{\nS}=\upsigma=(x_{0},\ldots,x_{\nS})$. It follows from
\cite[Proposition~12.28]{Livre1} that, for $\PP$-almost every
$\upomega\in\upOmega$,
$\prox_{\upgamma_{\nS}\fS_{k_{\nS}(\upomega)}}$ is firmly
nonexpansive, hence nonexpansive. On the other hand, we deduce from
Assumption~\ref{as:1} and the characterization of the proximity
operator \cite[Proposition~16.44]{Livre1} that, for $\PP$-almost
every $\upomega\in\upOmega$,
\begin{equation}
\label{e:103}
\sS_{k_{\nS}(\upomega)}^{}
\in\partial\fS_{k_{\nS}(\upomega)}(\bar{\zS})
\Leftrightarrow\bar{\zS}+\upgamma_{\nS}\sS_{k_{\nS}(\upomega)}^{}
-\bar{\zS}
\in\upgamma_{\nS}\partial\fS_{k_{\nS}(\upomega)}(\bar{\zS})
\Leftrightarrow\bar{\zS}
=\prox_{\upgamma_{\nS}\fS_{k_{\nS}(\upomega)}}
\brk1{\bar{\zS}+\upgamma_{\nS}\sS_{k_{\nS}(\upomega)}}
\end{equation}
Then it follows from \eqref{e:algo1}, \eqref{e:p1}, and
\eqref{e:103} that 
\begin{align}
&{\norm{x_{\nS+1}-\bar{\zS}}^2}\nonumber\\
&\;\;=\norm1{\prox_{\upgamma_{\nS}\mathsf{f}_{k_{\nS}^{}}}
\brk1{x_{\nS}-\upgamma_{\nS}\nabla\gS_{k_{\nS}}(x_{\nS})}
-\prox_{\upgamma_{\nS}\mathsf{f}_{k_{\nS}^{}}}
\brk1{\bar{\zS}+\upgamma_{\nS}\sS_{k_{\nS}}}}^2\nonumber\\
&\;\;\leq\norm1{x_{\nS}-\upgamma_{\nS}\nabla\gS_{k_{\nS}}(x_{\nS})
-\brk1{\bar{\zS}+\upgamma_{\nS}\sS_{k_{\nS}}}}^2\nonumber\\
&\;\;=\norm1{\brk{x_{\nS}-\bar{\zS}}
-\upgamma_{\nS}\brk1{\nabla\gS_{k_{\nS}}(x_{\nS})
+\sS_{k_{\nS}}}}^2\nonumber\\
&\;\;=\norm{x_{\nS}-\bar{\zS}}^2
-2\upgamma_{\nS}\scal1{x_{\nS}-\bar{\zS}}
{\nabla\gS_{k_{\nS}}(x_{\nS})
+\sS_{k_{\nS}}}
+\upgamma_{\nS}^2{\norm1{\sS_{k_{\nS}}+\nabla\gS_{k_{\nS}}(x_{\nS})
}^2}\nonumber\\
&\;\;\leq\norm{x_{\nS}-\bar{\zS}}^2
-2\upgamma_{\nS}\scal1{x_{\nS}-\bar{\zS}}
{\nabla\gS_{k_{\nS}}(x_{\nS})
+\sS_{k_{\nS}}}
+2\upgamma_{\nS}^2{\norm1{\nabla\gS_{k_{\nS}}(x_{\nS})
-\nabla\gS_{k_{\nS}}(\bar{\zS})}^2}
+2\upgamma_{\nS}^2{\norm1{\sS_{k_{\nS}}
+\nabla\gS_{k_{\nS}}(\bar{\zS})}^2}\nonumber\\
&\;\;\leq\norm{x_{\nS}-\bar{\zS}}^2
-2\upgamma_{\nS}\scal1{x_{\nS}-\bar{\zS}}
{\nabla\gS_{k_{\nS}}(x_{\nS})
+\sS_{k_{\nS}}}
+2\upbeta\upgamma_{\nS}^2{\norm{x_{\nS}-\bar{\zS}}^2}
+2\upgamma_{\nS}^2{\norm1{\sS_{k_{\nS}}
+\nabla\gS_{k_{\nS}}(\bar{\zS})}^2}\nonumber\\
&\;\;=\brk1{1+2\upbeta\upgamma_{\nS}^2}
\norm{x_{\nS}-\bar{\zS}}^2
-2\upgamma_{\nS}\scal1{x_{\nS}-\bar{\zS}}
{\nabla\gS_{k_{\nS}}(x_{\nS})
+\sS_{k_{\nS}}}
+2\upgamma_{\nS}^2{\norm1{\sS_{k_{\nS}}
+\nabla\gS_{k_{\nS}}(\bar{\zS})}^2}\;\;\Pas
\label{e:104}
\end{align}
Therefore, since $k_{\nS}$ is independent of $\XX_{\nS}$, 
$x_{\nS}-\bar{\zS}$ is $\XX_{\nS}$-measurable, $\nabla\gS$
is $(1/\upbeta)$-cocoercive \cite[Corollary~18.17]{Livre1}, 
and by Lemmas~\ref{l:1} and \ref{l:2}, we get 
\begin{align}
&\EC1{\norm{x_{\nS+1}-\bar{\zS}}^2}{\XX_{\nS}}
\nonumber\\
&\;\;\leq\brk1{1+2\upbeta\upgamma_{\nS}^2}
\norm{x_{\nS}-\bar{\zS}}^2
-2\upgamma_{\nS}\scal2{x_{\nS}-\bar{\zS}}
{\EC1{\nabla\gS_{k_{\nS}}(x_{\nS})
+\sS_{k_{\nS}}}{\XX_{\nS}}}
+2\upgamma_{\nS}^2
\EC1{\norm{\sS_{k_{\nS}}+\nabla\gS_{k_{\nS}}(\bar{\zS})}^2}
{\XX_{\nS}}\nonumber\\
&\;\;=\brk1{1+2\upbeta\upgamma_{\nS}^2}
{\norm{x_{\nS}-\bar{\zS}}^2}
-2\upgamma_{\nS}
\scal1{x_{\nS}-\bar{\zS}}
{\EE{\nabla\gS_{k}(x_{\nS})}+\EE{\sS_{k}}}
+2\upgamma_{\nS}^2\EE{\norm{\sS_{k}
+\nabla\gS_{k}(\bar{\zS})}^2}
\nonumber\\
&\;\;=\brk1{1+2\upbeta\upgamma_{\nS}^2}
{\norm{x_{\nS}-\bar{\zS}}^2}
-2\upgamma_{\nS}
\scal1{x_{\nS}-\bar{\zS}}{\nabla\gS(x_{\nS})-\nabla\gS(\bar{\zS})}
+2\upgamma_{\nS}^2\EE{\norm{\sS_{k}
+\nabla\gS_{k}(\bar{\zS})}^2}
\nonumber\\
&\;\;\leq\brk1{1+2\upbeta\upgamma_{\nS}^2}
{\norm{x_{\nS}-\bar{\zS}}^2}
-2\upbeta^{-1}\upgamma_{\nS}
\norm1{\nabla\gS(x_{\nS})-\nabla\gS(\bar{\zS})}^2
+2\upgamma_{\nS}^2\EE{\norm{\sS_{k}
+\nabla\gS_{k}(\bar{\zS})}^2}
\;\;\Pas
\label{e:105}
\end{align}
Since $\sum_{\nnn}\upgamma_{\nS}^2<\pinf$ and
$\EE\norm{\sS_{k}+\nabla\gS_{k}(\bar{\zS})}^2<\pinf$, we deduce
from \eqref{e:105} that $(x_{\nS})_{\nnn}$ is stochastic
quasi-Fej\'erian relative to the set $\{\bar{\zS}\}$ in the sense
of \cite[Proposition~2.3]{Siop15}. It then follows from
\cite[Proposition~2.3(i) and (ii)]{Siop15} that $(x_{\nS})_{\nnn}$
is bounded $\Pas$ and
\begin{equation}
\label{e:106}
\sum_{\nnn}\upgamma_{\nS}
\norm1{\nabla\gS(x_{\nS})-\nabla\gS(\bar{\zS})}^2\leq\pinf\;\;\Pas
\end{equation}
Hence, the assumption $\sum_{\nnn}\upgamma_{\nS}=\pinf$ yields
\begin{equation}
\label{e:106+}
\varliminf\:\norm1{\nabla\gS(x_{\nS})-\nabla\gS(\bar{\zS})}^2=0\;\;
\Pas
\end{equation}
Similarly, by taking the expected value in \eqref{e:105} we get
\begin{equation}
(\forall\nnn)\quad\EE{\norm{x_{\nS+1}-\bar{\zS}}^2}
\leq\brk1{1+2\upgamma_{\nS}^2\upbeta}
\EE{\norm{x_{\nS}-\bar{\zS}}^2}
+2\upgamma_{\nS}^2\EE{\norm{\sS_{k}
+\nabla\gS_{k}(\bar{\zS})}^2},
\label{e:107}
\end{equation}
which shows that $(x_{\nS})_{\nnn}$ is quasi-Fej\'erian in
$L^2(\upOmega,\FE,\PP;\HS)$ relative to the set $\{\bar{\zS}\}$
\cite[Definition~3.1]{Nonl13}. Hence, it follows from
\cite[Proposition~3.2(ii)]{Nonl13} that $(x_{\nS})_{\nnn}$ is
bounded in $L^2(\upOmega,\FE,\PP;\HS)$.

\ref{t:1ii}: Denote $\upvarphi=\fS+\gS$ and, for every
$\kS\in\mathsf{K}$, $\upvarphi_{\kS}=\fS_{\kS}+\gS_{\kS}$. Let
$\zS\in\Argmin\upvarphi$, and let $\nnn\smallsetminus\{0\}$. We
infer from Assumption~\ref{as:2} and \eqref{e:algo1} that
$x_{\nS}\in\dom\partial\fS_{k_{\nS-1}}\subset\dom\fS\;\Pas$ and,
similarly to \eqref{e:103} and \eqref{e:104}, we deduce that
\begin{align}
\norm{x_{\nS+1}-x_{\nS}}
&=\norm1{\prox_{\upgamma_{\nS}\mathsf{f}_{k_{\nS}^{}}}
\brk1{x_{\nS}-\upgamma_{\nS}\nabla\gS_{k_{\nS}}(x_{\nS})}
-\prox_{\upgamma_{\nS}\mathsf{f}_{k_{\nS}^{}}}
\brk1{x_{\nS}
+\upgamma_{\nS}\moyo{\partial\fS_{k_{\nS}}}{0}(x_{\nS})}}
\nonumber\\
&\leq\norm1{\upgamma_{\nS}\moyo{\partial\fS_{k_{\nS}}}{0}(x_{\nS})
+\upgamma_{\nS}\nabla\gS_{k_{\nS}}(x_{\nS})}\nonumber\\
&=\upgamma_{\nS}\norm1{\moyo{\partial\fS_{k_{\nS}}}{0}(x_{\nS})
+\nabla\gS_{k_{\nS}}(x_{\nS})}\;\;\Pas
\label{e:108}
\end{align}
On the other hand, it follows from \eqref{e:algo1} and
\cite[Proposition~12.26 and Theorem~18.15]{Livre1} that
\begin{equation}
\begin{cases}
\upgamma_{\nS}\fS_{k_{\nS}}(x_{\nS+1})
\leq\upgamma_{\nS}\fS_{k_{\nS}}(\zS)
+\scal1{x_{\nS+1}-\zS}{x_{\nS}-x_{\nS+1}
-\upgamma_{\nS}\nabla\gS_{k_{\nS}}(x_{\nS})}\;\;\Pas\\[1mm]
\upgamma_{\nS}\gS_{k_{\nS}}(x_{\nS+1})
\leq\upgamma_{\nS}\gS_{k_{\nS}}(\zS)
+\scal1{x_{\nS+1}-\zS}{\upgamma_{\nS}\nabla\gS_{k_{\nS}}(x_{\nS})}
+\dfrac{\upgamma_{\nS}\upbeta}{2}\norm{x_{\nS+1}-x_{\nS}}^2\;\;\Pas
\end{cases}
\end{equation}
Thus, after adding both inequalities and rearranging the terms, we
obtain
\begin{equation}
\label{e:108+}
\scal1{x_{\nS+1}-\zS}{x_{\nS+1}-x_{\nS}}
\leq\upgamma_{\nS}\brk1{\upvarphi_{k_{\nS}}(\zS)
-\upvarphi_{k_{\nS}}(x_{\nS+1})}
+\dfrac{\upgamma_{\nS}\upbeta}{2}\norm{x_{\nS+1}-x_{\nS}}^2\;\;\Pas
\end{equation}
Then \eqref{e:108+}, the definition of the subdifferential, and 
\eqref{e:108} yield 
\begin{align}
&\scal1{x_{\nS+1}-\zS}{x_{\nS+1}-x_{\nS}}\nonumber\\
&\quad\leq\upgamma_{\nS}\brk1{\upvarphi_{k_{\nS}}(\zS)
-\upvarphi_{k_{\nS}}(x_{\nS})+\upvarphi_{k_{\nS}}(x_{\nS})
-\upvarphi_{k_{\nS}}(x_{\nS+1})} 
+\dfrac{\upgamma_{\nS}\upbeta}{2}\norm{x_{\nS+1}-x_{\nS}}^2
\nonumber\\
&\quad\leq\upgamma_{\nS}\brk2{\upvarphi_{k_{\nS}}(\zS)
-\upvarphi_{k_{\nS}}(x_{\nS})
+\scal1{x_{\nS}-x_{\nS+1}}{\moyo{\partial\fS_{k_{\nS}}}{0}(x_{\nS})
+\nabla\gS_{k_{\nS}}(x_{\nS})}} 
+\dfrac{\upgamma_{\nS}\upbeta}{2}\norm{x_{\nS+1}-x_{\nS}}^2
\nonumber\\
&\quad\leq\upgamma_{\nS}\brk1{\upvarphi_{k_{\nS}}(\zS)
-\upvarphi_{k_{\nS}}(x_{\nS})}
+\upgamma_{\nS}\norm{x_{\nS}-x_{\nS+1}}
\norm1{\moyo{\partial\fS_{k_{\nS}}}{0}(x_{\nS})
+\nabla\gS_{k_{\nS}}(x_{\nS})} 
+\dfrac{\upgamma_{\nS}\upbeta}{2}\norm{x_{\nS+1}-x_{\nS}}^2
\nonumber\\
&\quad\leq\upgamma_{\nS}\brk1{\upvarphi_{k_{\nS}}(\zS)
-\upvarphi_{k_{\nS}}(x_{\nS})}
+\upgamma_{\nS}^2\norm1{\moyo{\partial\fS_{k_{\nS}}}{0}(x_{\nS})
+\nabla\gS_{k_{\nS}}(x_{\nS})}^2 
+\dfrac{\upgamma_{\nS}\upbeta}{2}\norm{x_{\nS+1}-x_{\nS}}^2\;\;\Pas
\label{e:109}
\end{align}
Thus, we deduce from \eqref{e:109}, Lemmas~\ref{l:1} and \ref{l:2},
and Assumption~\ref{as:2} that, with probability~$1$, 
\begin{align}
&\EC1{\norm{x_{\nS+1}-\zS}^2}{\XX_{\nS}}
\nonumber\\
&\;\;=\norm{x_{\nS}-\zS}^2
+2\EC1{\scal{x_{\nS}-\zS}{x_{\nS+1}-x_{\nS}}}{\XX_{\nS}}
+\EC1{\norm{x_{\nS+1}-x_{\nS}}^2}{\XX_{\nS}}
\nonumber\\
&\;\;=\norm{x_{\nS}-\zS}^2
+2\EC1{\scal{x_{\nS+1}-\zS}{x_{\nS+1}-x_{\nS}}}{\XX_{\nS}}
-\EC1{\norm{x_{\nS+1}-x_{\nS}}^2}{\XX_{\nS}}
\nonumber\\
&\;\;\leq\norm{x_{\nS}-\zS}^2
+2\upgamma_{\nS}\EC1{\upvarphi_{k_{\nS}}(\zS)
-\upvarphi_{k_{\nS}}(x_{\nS})}{\XX_{\nS}}+2\upgamma_{\nS}^2\EC1{
\norm1{\moyo{\partial\fS_{k_{\nS}}}{0}(x_{\nS})
+\nabla\gS_{k_{\nS}}(x_{\nS})}^2}{\XX_{\nS}}
\nonumber\\
&\qquad+{\upgamma_{\nS}\upbeta}
\EC1{\norm{x_{\nS+1}-x_{\nS}}^2}{\XX_{\nS}}
-\EC1{\norm{x_{\nS+1}-x_{\nS}}^2}{\XX_{\nS}}
\nonumber\\
&\;\;=\norm{x_{\nS}-\zS}^2
+2\upgamma_{\nS}\brk1{\upvarphi(\zS)
-\upvarphi(x_{\nS})}+2\upgamma_{\nS}^2\EC1{
\norm1{\moyo{\partial\fS_{k_{\nS}}}{0}(x_{\nS})
+\nabla\gS_{k_{\nS}}(x_{\nS})}^2}{\XX_{\nS}}
\nonumber\\
&\qquad+\brk1{{\upgamma_{\nS}\upbeta}-1}
\EC1{\norm{x_{\nS+1}-x_{\nS}}^2}{\XX_{\nS}}
\nonumber\\
&\;\;\leq\norm{x_{\nS}-\zS}^2
+2\upgamma_{\nS}\brk1{\upvarphi(\zS)-\upvarphi(x_{\nS})}
+2\upgamma_{\nS}^2\uppsi\brk1{\norm{x_{\nS}}}
+\brk1{\upgamma_{\nS}\upbeta-1}
\EC1{\norm{x_{\nS+1}-x_{\nS}}^2}{\XX_{\nS}}\nonumber\\
&\;\;\leq\norm{x_{\nS}-\zS}^2
+2\upgamma_{\nS}\brk1{\upvarphi(\zS)-\upvarphi(x_{\nS})}
+2\upgamma_{\nS}^2\uppsi\brk1{\norm{x_{\nS}}}
+\max\brk[c]1{0,\upgamma_{\nS}\upbeta-1}
\EC1{\norm{x_{\nS+1}-x_{\nS}}^2}{\XX_{\nS}}.
\label{e:110}
\end{align}
We infer from \ref{t:1i} and Assumption~\ref{as:2} that
$\uppsi(\norm{x_{\nS}})$ is bounded $\Pas$ In addition,
$\sum_{\nnn}\upgamma_{\nS}^2<\pinf$ yields
$\upgamma_{\nS}\upbeta-1<0$ for $\nS$ large enough. Altogether,
\eqref{e:110} yields that $(x_{\nS})_{\nnn}$ is stochastic
quasi-Fej\'erian relative to $\Argmin\upvarphi$ in the sense of
\cite[Proposition~2.3]{Siop15}. It then follows from
\cite[Proposition~2.3(ii)]{Siop15} that 
\begin{equation}
\label{e:111}
\sum_{\nnn}\upgamma_{\nS}
\brk1{\upvarphi(x_{\nS})-\upvarphi(\zS)}\leq\pinf\;\;\Pas
\end{equation}
Hence, since $\sum_{\nnn}\upgamma_{\nS}=\pinf$, we have 
$\varliminf\:\upvarphi(x_{\nS})=\inf\upvarphi(\HS)\;\Pas$

\ref{t:1iii}: Let us show that $(x_{\nS})_{\nnn}$ corresponds to a
sequence generated by \cite[Algorithm~3.4]{Moco26}. To this end,
set $\ZS=\Argmin\upvarphi$ and
\begin{equation}
(\forall\nnn)\quad
\begin{cases}
t_{\nS}^*=2\brk{x_{\nS}-x_{\nS+1}}\in L^2(\upOmega,\FE,\PP;\HS);\\
\eta_{\nS}=\scal1{x_{\nS+1}+x_{\nS}}{x_{\nS}-x_{\nS+1}}
\in L^1(\upOmega,\FE,\PP;\RR);\\
\alpha_{\nS}=1;\\
\varepsilon_{\nS}=2\upgamma_{\nS}^2\uppsi\brk1{\norm{x_{\nS}}}
+\max\brk[c]1{0,\upgamma_{\nS}\upbeta-1}
\EC1{\norm{x_{\nS+1}-x_{\nS}}^2}{\XX_{\nS}}\in\RP\;\;\Pas;\\
\lambda_{\nS}=\dfrac{1}{2}.
\end{cases}
\end{equation}
The Cauchy--Schwarz inequality shows that 
\begin{equation}
(\forall\nnn)\quad
\dfrac{\mathsf{1}_{[t_{\nS}^*\neq0]}\mathsf{1}_{
\left[\scal{x_{\nS}}{t_{\nS}^*}>\eta_{\nS}\right]}\eta_{\nS}}
{\|t_{\nS}^*\|+\mathsf{1}_{[t_{\nS}^*=0]}}\in
L^2(\upOmega,\FE,\PP;\RR).
\end{equation}
In addition, we can show analogously to \eqref{e:109} and
\eqref{e:110} that, for every $\nnn$ and every $\zS\in\ZS$,
\begin{equation}
\scal1{\zS}{\EC{\alpha_{\nS}t^*_{\nS}}{\XX_{\nS}}}\leq
\EC{\alpha_{\nS}\eta_{\nS}}{\XX_{\nS}}+
\varepsilon_{\nS}\;\;\Pas
\end{equation}
Finally, we derive that
\begin{equation}
(\forall\nnn)\quad
x_{\nS+1}=x_{\nS}-\lambda_{\nS}\alpha_{\nS}t_{\nS}^*.
\end{equation}
Altogether, we confirm that $(x_{\nS})_{\nnn}$ is a sequence
constructed by \cite[Algorithm~3.4]{Moco26}. On the other hand, it
follows from \ref{t:1i} and \ref{t:1ii} that there exists
$\upOmega'\in\FE$ such that
\begin{equation}
\PP(\upOmega')=1\;\text{and}\;(\forall\upomega\in\upOmega')\;\;
\brk[s]1{({x_{\nS}(\upomega)})_{\nnn}\;\text{is bounded and}\;
\varliminf\upvarphi(x_{\nS}(\upomega))=\inf\upvarphi(\HS)}.
\end{equation} 
Let $\upomega\in\upOmega'$ and let $(\jS_{\nS})_{\nnn}$ be a
strictly increasing sequence in $\NN$ such that
$\lim\upvarphi(x_{\jS_{\nS}}(\upomega))=\inf\upvarphi(\HS)$. Since
the sequence is bounded, there exists a point $\xS\in\HS$ and a 
further subsequence, say $x_{\lS_{\jS_{\nS}}}(\upomega)$, such that
$x_{\lS_{\jS_{\nS}}}(\upomega)\to\xS$. 
Note that the lower semicontinuity of $\fS$ and the continuity of
$\gS$ lead to the lower semicontinuity of $\upvarphi$, which
yields 
\begin{equation}
\inf\upvarphi(\HS)\leq\upvarphi(\xS)
\leq\lim\upvarphi\brk1{x_{\lS_{\jS_{\nS}}}(\upomega)}
=\inf\upvarphi(\HS).
\end{equation}
Hence $\xS\in\Argmin\upvarphi$. Since $\upomega$ is arbitrarily
taken in $\upOmega'$, we deduce that, for $\PP$-almost every
$\upomega\in\upOmega$, there exists a cluster point of
$(x_{\nS}(\upomega))_{\nnn}$ that belongs to $\Argmin\upvarphi$. 
Therefore, it follows from \cite[Theorem~3.6(i)(d)]{Moco26} and the
fact that $\sum_{\nnn}\varepsilon_{\nS}<\pinf\;\Pas$ that there
exists an $(\Argmin\upvarphi)$-valued random variable $x$ such that
$x_{\nS}\to x\;\Pas$ Furthermore, \ref{t:1i} and Fatou's lemma
guarantee that
\begin{equation}
0\leq\EE\norm{x}^2
\leq\EE\brk1{\varliminf\norm{x_{\nS}}^2}
\leq\varliminf\EE\norm{x_{\nS}}^2
\leq\sup\EE\norm{x_{\nS}}^2<\pinf,
\end{equation}
which shows that $x\in L^2(\upOmega,\FE,\PP;\Argmin\upvarphi)$. 

\ref{t:1iv}: The continuity of $\nabla\gS$ and \ref{t:1iii} yield
$\nabla\gS(x_{\nS})\to\nabla\gS(x)\;\Pas$ On the other hand,
\eqref{e:106+} shows that
$\varliminf\:\norm{\nabla\gS(x_{\nS})-\nabla\gS(\bar{\zS})}
=0\;\Pas$ Then
\begin{equation}
\norm1{\nabla\gS(x)-\nabla\gS(\bar{\zS})}
\leq\varliminf\brk2{\norm1{\nabla\gS(x_{\nS})-\nabla\gS(x)}
+\norm1{\nabla\gS(x_{\nS})-\nabla\gS(\bar{\zS})}}=0\;\;\Pas,
\end{equation}
which shows that $\nabla\gS(x)=\nabla\gS(\bar{\zS})\;\Pas$
Therefore, $\nabla\gS(x_{\nS})\to\nabla\gS(\bar{\zS})\;\Pas$

\ref{t:1v}: It follows from \ref{t:1i} that 
\begin{equation}
\sup_{\nnn}\EE\uppsi(\norm{x_{\nS}})
=\sup_{\nnn}\upxi\brk1{1+\EE\norm{x_{\nS}}^2}<\pinf.
\end{equation}
Hence $\sum_{\nnn}\EE\varepsilon_{\nS}<\pinf$ and the convergence
of $(x_{\nS})_{\nnn}$ to $x$ in $L^1(\upOmega,\FE,\PP;\HS)$ follows
from \cite[Theorem~3.6(ii)(d)]{Moco26}.
\end{proof}

We present two corollaries of Theorem~\ref{t:1} that introduce
novel almost surely convergent results for the stochastic proximal
point algorithm and the stochastic gradient method.

\begin{corollary}
\label{c:1}
Let $\fS\in\upGamma_{0}(\HS)$ and let $(\mathsf{K},\EuScript{K})$
be a measurable space. For every $\kS\in\mathsf{K}$, let
$\fS_{\kS}\in\upGamma_{0}(\HS)$ such that
$\dom\partial\fS_{\kS}=\dom\partial\fS$. Further, let
$k\colon(\upOmega,\FE,\PP)\to(\mathsf{K},\EuScript{K})$ be a random
variable such that 
\begin{equation}
(\forall\xS\in\dom\partial\fS)\quad\fS(\xS)
=\Int_{\upOmega}\fS_{k(\upomega)}(\xS)\PP(d\upomega).
\end{equation}
Assume that there exists $\bar{\zS}\in\Argmin\fS$ such that, for
every $\kS\in\mathsf{K}$, $s_{\kS}\in\partial\fS_{\kS}(\bar{\zS})$,
and
\begin{equation}
\int_{\upOmega}\sS_{k(\upomega)}\PP(d\upomega)={0}
\;\;\text{and}\;\;
\int_{\upOmega}\norm{\sS_{k(\upomega)}}^2\PP(d\upomega)<\pinf.
\end{equation}
Further, assume that there exists an increasing function
$\uppsi\colon\RP\to\RP$ such that
\begin{equation}
(\forall\xS\in\HS)\quad
\int_{\upOmega}\norm1{\moyo{\partial\fS_{k(\upomega)}(\xS)}{0}
}^2\PP(d\upomega)
\leq\uppsi\brk1{\norm{\xS}}.
\end{equation}
Let $x_{0}\in L^2(\upOmega,\FE,\PP;\HS)$ and let
$(\upgamma_{\nS})_{\nnn}$ be a sequence in $\RPP$ such that
$\sum_{\nnn}\upgamma_{\nS}=\pinf$ and
$\sum_{\nnn}\upgamma_{\nS}^2<\pinf$. Iterate
\begin{equation}
(\forall\nnn)\quad
x_{\nS+1}=\prox_{\upgamma_{\nS}\fS_{k_{\nS}}}(x_{\nS}).
\end{equation}
Then $(x_{\nS})_{\nnn}$ converges $\Pas$ to a random variable $x\in
L^2(\upOmega,\FE,\PP;\Argmin\fS)$.

\end{corollary}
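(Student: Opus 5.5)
The plan is to obtain Corollary~\ref{c:1} as the special case $\gS=0$, $\gS_{\kS}=0$ of Theorem~\ref{t:1}\ref{t:1iii}. With these choices, Problem~\ref{prob:0} and Problem~\ref{prob:1} reduce to minimizing $\fS$, and Algorithm~\ref{algo:1} becomes $x_{\nS+1}=\prox_{\upgamma_{\nS}\fS_{k_{\nS}}}(x_{\nS})$. Here $k_{\nS}$ is understood, as in Algorithm~\ref{algo:1}, to be a copy of $k$ independent of $\upsigma(x_0,\ldots,x_{\nS})$.

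First, check that we are in the setting of Problem~\ref{prob:0}. The zero function is convex and differentiable, and its gradient is $\upbeta$-Lipschitzian for any $\upbeta\in\RPP$; fix, say, $\upbeta=1$. We also have $\Argmin\fS\ni\bar{\zS}$, so the argmin is nonempty.

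Second, check the representation \eqref{e:p1}. The $\gS$ part is trivial. For the $\fS$ part, the hypothesis is only stated on $\dom\partial\fS$, whereas \eqref{e:p1} asks for it on $\dom\fS$. I would address this by noting where the proof of Theorem~\ref{t:1} actually uses \eqref{e:p1}. It is used through Lemma~\ref{l:2}, evaluated at $\zS\in\Argmin\fS\subset\dom\partial\fS$ (since $0\in\partial\fS(\zS)$) and at iterates $x_{\nS}$. For $\nS\geq1$, Assumption~\ref{as:2} gives $x_{\nS}\in\dom\partial\fS_{k_{\nS-1}}=\dom\partial\fS$ almost surely. Hence the identity on $\dom\partial\fS$ suffices. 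The case $\nS=0$ is also harmless, because Lemma~\ref{l:2} and \eqref{e:110} are only invoked for $\nS\geq1$, and one may shift indices by one step.

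Third, verify the two assumptions. With $\nabla\gS_{\kS}=0$, Assumption~\ref{as:1} reads exactly $\int\sS_{k}\,d\PP=0$ and $\int\norm{\sS_k}^2\,d\PP<\pinf$, which is given. Assumption~\ref{as:2} requires $\dom\partial\fS_{\kS}=\dom\partial\fS$ together with the bound \eqref{e:as3}, which reduces to the given bound on $\EE\norm{\moyo{\partial\fS_k(\xS)}{0}}^2$. The corollary states this bound on all of $\HS$, which is stronger than needed.

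The step sizes satisfy $\sum\upgamma_{\nS}=\pinf$ and $\sum\upgamma_{\nS}^2<\pinf$. Theorem~\ref{t:1}\ref{t:1iii} therefore yields almost sure convergence to a random variable $x\in L^2(\upOmega,\FE,\PP;\Argmin\fS)$.

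The main obstacle is the mismatch between the domains $\dom\partial\fS$ and $\dom\fS$ in \eqref{e:p1}. I expect it to be resolved by the observation in the second step, namely that all evaluations of the expectation identity happen at points of $\dom\partial\fS$. Everything else is a direct substitution.
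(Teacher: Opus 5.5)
Your proposal is correct and follows the paper's route. The paper gives Corollary~\ref{c:1} without a separate proof, as the specialization $\gS=\gS_{\kS}=0$ of Theorem~\ref{t:1}\ref{t:1iii}, under which Assumptions~\ref{as:1} and \ref{as:2} reduce to the corollary's hypotheses. Your further check fills in a point the paper leaves implicit: the integral representation of $\fS$ is only evaluated at minimizers and at iterates $x_{\nS}$ with $\nS\geq1$, all of which lie in $\dom\partial\fS$ almost surely, so assuming it on $\dom\partial\fS$ rather than $\dom\fS$ is enough.
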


\begin{corollary}
\label{c:2}
Let $\upbeta\in\RPP$ and let $\gS\colon\HS\to\RR$ be a convex
differentiable function such that $\nabla\gS$ is
$\upbeta$-Lipschitzian. Let $(\mathsf{K},\EuScript{K})$ be a
measurable space. For every $\kS\in\mathsf{K}$, let
$\gS_{\kS}\colon\HS\to\RR$ be a convex differentiable function such
that $\nabla\gS_{\kS}$ is $\upbeta$-Lipschitzian. Further, let
$k\colon(\upOmega,\FE,\PP)\to(\mathsf{K},\EuScript{K})$ be a random
variable such that 
\begin{equation}
(\forall\xS\in\HS)\quad\gS(\xS)
=\Int_{\upOmega}\gS_{k(\upomega)}(\xS)\PP(d\upomega).
\end{equation}
Assume that there exists an increasing function
$\uppsi\colon\RP\to\RP$ such that
\begin{equation}
(\forall\xS\in\HS)\quad
\int_{\upOmega}\norm1{\nabla\gS_{k(\upomega)}(\xS)}^2\PP(d\upomega)
\leq\uppsi\brk1{\norm{\xS}}.
\end{equation}
Let $x_{0}\in L^2(\upOmega,\FE,\PP;\HS)$ and let
$(\upgamma_{\nS})_{\nnn}$ be a sequence in $\RPP$ such that
$\sum_{\nnn}\upgamma_{\nS}=\pinf$ and
$\sum_{\nnn}\upgamma_{\nS}^2<\pinf$. Iterate
\begin{equation}
(\forall\nnn)\quad
x_{\nS+1}=x_{\nS}-\upgamma_{\nS}\nabla\gS_{k_{\nS}}(x_{\nS}).
\end{equation}
Then $(x_{\nS})_{\nnn}$ converges $\Pas$ to a random variable $x\in
L^2(\upOmega,\FE,\PP;\Argmin\gS)$.
\end{corollary}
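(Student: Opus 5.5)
The plan is to obtain Corollary~\ref{c:2} as the special case of Theorem~\ref{t:1}\ref{t:1iii} in which the nonsmooth part vanishes. I will set $\fS=0$ and, for every $\kS\in\mathsf{K}$, $\fS_{\kS}=0$. Then $\fS,\fS_{\kS}\in\upGamma_{0}(\HS)$, $\prox_{\upgamma_{\nS}\fS_{k_{\nS}}}=\Id$, and Algorithm~\ref{algo:1} reduces exactly to the stated stochastic gradient iteration. The first line of \eqref{e:p1} holds trivially, and the second line is the hypothesis on $\gS$. Moreover, $\Argmin(\fS+\gS)=\Argmin\gS$. Problem~\ref{prob:0} requires this set to be nonempty, so I will treat $\Argmin\gS\neq\emp$ as implicit in the corollary, as it is in the corollary's conclusion.

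Assumption~\ref{as:2} is the easy check. Since $\dom\partial\fS_{\kS}=\dom\partial\fS=\HS$ and $\moyo{\partial\fS_{\kS}(\xS)}{0}=0$, inequality \eqref{e:as3} becomes precisely the assumed bound $\EE\norm{\nabla\gS_{k}(\xS)}^2\leq\uppsi(\norm{\xS})$.

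For Assumption~\ref{as:1}, I take any $\bar{\zS}\in\Argmin\gS$ and $\sS_{\kS}=0\in\partial\fS_{\kS}(\bar{\zS})$. The square-integrability condition $\EE\norm{\nabla\gS_{k}(\bar{\zS})}^2\leq\uppsi(\norm{\bar{\zS}})<\pinf$ follows from the growth hypothesis. The remaining requirement is $\EE\nabla\gS_{k}(\bar{\zS})=0$, that is, $\EE\nabla\gS_{k}(\bar{\zS})=\nabla\gS(\bar{\zS})$, which is zero by Fermat's rule.

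Establishing this interchange of differentiation and expectation is the one nontrivial step. I cannot use Lemma~\ref{l:2}\ref{l:2iii} directly, since its proof already assumes Assumption~\ref{as:1}. Instead I will argue via directional derivatives. Fix $\mathsf{d}\in\HS$ and $t\in\zeroun$. By convexity, the difference quotients $(\gS_{k}(\bar{\zS}+t\mathsf{d})-\gS_{k}(\bar{\zS}))/t$ decrease as $t\downarrow0$ to $\scal{\nabla\gS_{k}(\bar{\zS})}{\mathsf{d}}$. They are bounded above by their value at $t=1$, which is integrable, and bounded below by $\scal{\nabla\gS_{k}(\bar{\zS})}{\mathsf{d}}$, which is integrable by the second-moment bound. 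Dominated (or monotone) convergence, together with \eqref{e:p1}, then gives $\scal{\EE\nabla\gS_{k}(\bar{\zS})}{\mathsf{d}}=\scal{\nabla\gS(\bar{\zS})}{\mathsf{d}}$. Since $\mathsf{d}$ is arbitrary, $\EE\nabla\gS_{k}(\bar{\zS})=\nabla\gS(\bar{\zS})=0$.

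With both assumptions verified and the step sizes satisfying $\sum_{\nnn}\upgamma_{\nS}=\pinf$ and $\sum_{\nnn}\upgamma_{\nS}^2<\pinf$, Theorem~\ref{t:1}\ref{t:1iii} yields $x_{\nS}\to x\;\Pas$ with $x\in L^2(\upOmega,\FE,\PP;\Argmin\gS)$. Note that the argument never uses the $\upbeta$-Lipschitz property of each individual $\nabla\gS_{\kS}$; only that of $\nabla\gS$ enters, through Theorem~\ref{t:1}.
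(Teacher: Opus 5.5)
Your proposal is correct and matches the intended derivation. The paper gives no separate proof: the corollary is Theorem~\ref{t:1}\ref{t:1iii} with $\fS=\fS_{\kS}=0$. Your direct check that $\EE\nabla\gS_{k}(\bar{\zS})=\nabla\gS(\bar{\zS})=0$ via monotone convex difference quotients is sound, and it rightly avoids the circular appeal to Lemma~\ref{l:2}. Flagging the implicit hypothesis $\Argmin\gS\neq\emp$ is also right.

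Drop the closing remark, however. The proof of Theorem~\ref{t:1} does use the Lipschitz continuity of each individual $\nabla\gS_{k_{\nS}}$, even though Problem~\ref{prob:1} does not list it. It enters in \eqref{e:104}, to bound $\norm{\nabla\gS_{k_{\nS}}(x_{\nS})-\nabla\gS_{k_{\nS}}(\bar{\zS})}^2$ by a multiple of $\norm{x_{\nS}-\bar{\zS}}^2$, and in the descent-lemma inequality for $\gS_{k_{\nS}}$ in part~\ref{t:1ii}. So this hypothesis of the corollary is what makes the theorem's argument go through, not a superfluous one.
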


\section{Application to mixed-loss classification problems}
\label{sec:4}

We address a binary classification problem which is
modeled via the combination of two loss functions.

\begin{problem}
\label{prob:5}
The training data samples are split into two finite collections in 
$\RR^{\mathsf{N}}\times\{-1,1\}$:
$(\mathsf{u}_{\kS},\upxi_{\kS})_{\kS\in\mathsf{K}_1}$ and 
$(\mathsf{u}_{\kS},\upxi_{\kS})_{\kS\in\mathsf{K}_2}$.
Let $\upalpha\in\zeroun$. The task is to
\begin{equation}
\label{e:HL}
\minimize{\xS\in\RR^\mathsf{N}}
{\frac{1}{\card{\mathsf{K}_1}}\sum_{\kS\in\mathsf{K}_1}
\fS_{\kS}(\xS)
+\frac{1}{\card{\mathsf{K}_2}}\sum_{\kS\in\mathsf{K}_2}
\gS_{\kS}(\xS)},
\end{equation}
where
\begin{equation}
\label{e:p51}
\begin{cases}
(\forall\kS\in\mathsf{K}_1)\quad
\fS_{\kS}(\xS)=\upalpha\max\{0,1-\upxi_{\kS}
\scal{\xS}{\mathsf{u}_{\kS}}\};\\
(\forall\kS\in\mathsf{K}_2)\quad
\gS_{\kS}(\xS)=
(1-\upalpha)
\ln\brk1{1+\exp\brk1{-\upxi_{\kS}\scal{\xS}{\mathsf{u}_{\kS}}}}.
\end{cases}
\end{equation}
\end{problem}

Mixed-loss problems, in particular Problem~\ref{prob:5}, are
commonly used to train multi-task learning models; see, e.g.,
\cite{Chen24}. The goal of Problem~\ref{prob:5} is to learn a
linear classifier $\xS\in\HS=\RR^{\mathsf{N}}$ by minimizing the
mixed-loss function. In our model, $\mathsf{K}_1$ represents a set
of noisy data, which we handle using the hinge loss, whereas
$\mathsf{K}_2$ represents accurate data, for which we use the
logistic loss. To solve Problem~\ref{prob:5} using
Algorithm~\ref{algo:1}, let us first provide the proximity operator
of the hinge loss and the gradient of the logistic loss. As
shown in \cite[Example~24.37]{Livre1}, for every $\xS\in\HS$,
$\upgamma\in\RPP$, $\iS\in\mathsf{K}_1$, and $\jS\in\mathsf{K}_2$,
\begin{equation}
\label{e:ap6}
\begin{cases}
\prox_{\upgamma\fS_{\iS}}(\xS)
=
\begin{cases}
\xS,&\text{if}\;\upxi_{\iS}\scal{\mathsf{u}_{\iS}}{\xS}>1;\\
\xS+\dfrac{1-\upxi_{\iS}\scal{\mathsf{u}_{\iS}}{\xS}}
{\norm{\mathsf{u}_{\iS}}^2},
&\text{if}\;1\geq\upxi_{\iS}\scal{\mathsf{u}_{\iS}}{\xS}\geq
1-\upalpha\upgamma\norm{\mathsf{u}_{\iS}}^2;\\
\xS+\upalpha\upgamma\upxi_{\iS}\scal{\mathsf{u}_{\iS}}{\xS},
&\text{if}\;1-\upalpha\upgamma\norm{\mathsf{u}_{\iS}}^2>
\upxi_{\iS}\scal{\mathsf{u}_{\iS}}{\xS};
\end{cases}\\[1cm]
\nabla\gS_{\jS}(\xS)
=-\dfrac{(1-\upalpha)}
{1+\exp\brk1{\upxi_{\jS}\scal{\xS}{\mathsf{u}_{\jS}}}}
\upxi_{\jS}\mathsf{u}_{\jS}.
\end{cases}
\end{equation}

\begin{proposition}
\label{p:3}
In the context of Problem~\ref{prob:5},
let $x_{0}\in L^2(\upOmega,\FE,\PP;\HS)$ and let
$(\upgamma_{\nS})_{\nnn}$ be a sequence in $\RPP$ such that
$\sum_{\nnn}\upgamma_{\nS}=\pinf$ and
$\sum_{\nnn}\upgamma_{\nS}^2<\pinf$. Iterate
\begin{equation}
\label{e:algo3}
\hskip -1mm 
\begin{array}{l}
\text{for}\;\nS=0,1,\ldots\\
\left\lfloor
\begin{array}{l}
\textup{take}\;(i_{\nS},j_{\nS})\;\textup{uniformly
in}\;\mathsf{K}_1\times\mathsf{K}_2\;\textup{and independent of}\;
\upsigma(x_{0},\ldots,x_{\nS})\\
x_{\nS+1}=\prox_{\upgamma_{\nS}\mathsf{f}_{i_{\nS}^{}}}
\brk1{x_{\nS}-\upgamma_{\nS}\nabla\gS_{j_{\nS}}(x_{\nS})}.
\end{array}
\right.\\
\end{array}
\end{equation}
Denote by $\ZS$ the set of solutions to Problem~\ref{prob:5} and
assume that $\ZS\neq\emp$.
Then $(x_{\nS})_{\nnn}$ converges $\Pas$ and in
$L^1(\upOmega,\FE,\PP;\HS)$ to a $\ZS$-valued random variable.
\end{proposition}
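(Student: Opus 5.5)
Our plan is to cast the iteration \eqref{e:algo3} as an instance of Algorithm~\ref{algo:1} and then apply Theorem~\ref{t:1}\ref{t:1iii} and \ref{t:1v}.

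\emph{Reformulation.} We set $\mathsf{K}=\mathsf{K}_1\times\mathsf{K}_2$ with the power-set $\upsigma$-algebra and let $k$ be uniform on $\mathsf{K}$. For $\kS=(\iS,\jS)$ we put $\widetilde{\fS}_{\kS}=\fS_{\iS}$ and $\widetilde{\gS}_{\kS}=\gS_{\jS}$. Then $\fS=\EE\widetilde{\fS}_k$ is the averaged hinge term and $\gS=\EE\widetilde{\gS}_k$ is the averaged logistic term, so \eqref{e:p1} holds and \eqref{e:algo3} is exactly \eqref{e:algo1}. Here $\fS\in\upGamma_0(\RR^{\mathsf N})$ is real-valued, and each $\gS_\jS$ is convex and smooth with $\nabla\gS_{\jS}$ Lipschitz with constant $(1-\upalpha)\norm{\mathsf{u}_\jS}^2/4$. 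Hence $\nabla\gS$ is $\upbeta$-Lipschitz with $\upbeta=\max_\jS(1-\upalpha)\norm{\mathsf u_\jS}^2/4$. Since $\ZS\neq\emp$, Problem~\ref{prob:0} is satisfied.

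\emph{Assumption~\ref{as:2}.} Every $\fS_\iS$ is real-valued and convex, so $\dom\partial\fS_{\iS}=\RR^{\mathsf N}=\dom\partial\fS$. For the growth bound we have $\norm{\moyo{\partial\fS_{\iS}(\xS)}{0}}\leq\upalpha\norm{\mathsf u_\iS}$ and $\norm{\nabla\gS_{\jS}(\xS)}\leq(1-\upalpha)\norm{\mathsf u_\jS}$. Therefore \eqref{e:as3} holds with the constant $\uppsi\equiv 2\max_{\kS}\norm{\mathsf u_\kS}^2$, which is increasing and is dominated by a function of the form $\upxi(1+\abs{\cdot}^2)$. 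Since \eqref{e:as3} is only an upper bound, we may take $\uppsi=\upxi(1+\abs{\cdot}^2)$ directly, which puts us in the setting of Theorem~\ref{t:1}\ref{t:1v}.

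\emph{Assumption~\ref{as:1}.} This is the step needing care. Fix $\bar\zS\in\ZS$. By the sum rule (all functions are finite, so no qualification condition is needed) we have $0\in\partial\fS(\bar\zS)+\nabla\gS(\bar\zS)$ and $\partial\fS(\bar\zS)=\card(\mathsf K_1)^{-1}\sum_{\iS}\partial\fS_{\iS}(\bar\zS)$. So there exist $\mathsf t_\iS\in\partial\fS_\iS(\bar\zS)$ with $\card(\mathsf K_1)^{-1}\sum_\iS\mathsf t_\iS=-\nabla\gS(\bar\zS)$. Setting $\sS_{(\iS,\jS)}=\mathsf t_\iS$, the uniformity of $k$ gives
\[
\EE\bigl(\sS_k+\nabla\widetilde\gS_k(\bar\zS)\bigr)=\card(\mathsf K_1)^{-1}\sum_\iS\mathsf t_\iS+\nabla\gS(\bar\zS)=0 .
\]
The second moment is a finite average of finite quantities, so it is finite.

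\emph{Conclusion.} Theorem~\ref{t:1}\ref{t:1iii} then yields almost sure convergence of $(x_\nS)_{\nnn}$ to a $\ZS$-valued random variable $x\in L^2$, and Theorem~\ref{t:1}\ref{t:1v} yields convergence in $L^1(\upOmega,\FE,\PP;\HS)$. The only delicate points are the subgradient decomposition above and the remark that a bounded $\uppsi$ may be replaced by $\upxi(1+\abs{\cdot}^2)$; the rest is verification.
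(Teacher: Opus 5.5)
Your proposal is correct and follows essentially the paper's route: you realize \eqref{e:algo3} as Algorithm~\ref{algo:1} with a uniform index on $\mathsf{K}_1\times\mathsf{K}_2$, obtain Assumption~\ref{as:1} from Fermat's rule plus a subdifferential sum rule, obtain Assumption~\ref{as:2} from the uniform boundedness of the (sub)gradients, and conclude with Theorem~\ref{t:1}\ref{t:1iii} and~\ref{t:1v}. You differ only in using the elementary finite sum rule for real-valued convex functions, where the paper invokes general results on subdifferentials of integral functionals, and in spelling out two points the paper leaves implicit: the explicit product-space reformulation, and the replacement of the constant bound by $\upxi(1+\abs{\cdot}^2)$ so that Theorem~\ref{t:1}\ref{t:1v} formally applies.
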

\begin{proof}
We deduce from \eqref{e:ap6} that, for every $\xS\in\HS$ and
$\jS\in\mathsf{K}_2$,
\begin{equation} 
\norm{\nabla^2\gS_{\jS}(\xS)}
=\dfrac{(1-\upalpha)
\exp\brk1{\upxi_{\jS}\scal{\xS}{\mathsf{u}_{\jS}}}}
{\brk2{1+\exp\brk1{\upxi_{\jS}\scal{\xS}{\mathsf{u}_{\jS}}}}^2}
\norm{\mathsf{u}_{\jS}}^2
\leq\dfrac{1-\upalpha}{4}\norm{\mathsf{u}_{\jS}}^2.
\end{equation}
Set
\begin{equation}
\upbeta=\dfrac{1-\upalpha}{4}\max_{\jS\in\mathsf{K}_2}
\norm{\mathsf{u}_{\jS}}^2.
\end{equation}
Hence, for every
$\kS\in\mathsf{K}_2$, $\nabla\gS_{\kS}$ is 
$\upbeta$-Lipschitzian. Thus, we confirm that Problem~\ref{prob:5}
is an instance of Problem~\ref{prob:1}, and
\eqref{e:algo3} is an instance of Algorithm~\ref{algo:1}.
It follows from Fermat's rule \cite[Theorem~16.3]{Livre1} and
\cite[Theorems~1.37 and 3.8]{Penn24} that there exists
$\bar{\zS}\in\ZS$ and, for every $\kS\in\mathsf{K}_1$,
$\sS_{\kS}\in\partial\fS_{\kS}(\bar{\zS})$ such that 
\begin{equation}
\label{e:144}
0=\int_{\upOmega}\brk1{\sS_{k(\upomega)}+
\nabla\gS_{k(\upomega)}(\bar{\zS})}\PP(d\upomega).
\end{equation}
Furthermore, we infer from \eqref{e:p51} and the fact that the sets
$\mathsf{K}_1$ and $\mathsf{K}_2$ are finite that the subgradients
of $(\fS_{\kS})_{\kS\in\mathsf{K}_1}$ and the gradients of
$(\gS_{\kS})_{\kS\in\mathsf{K}_2}$ are uniformly bounded.
Hence Assumptions~\ref{as:1} and \ref{as:2} hold with $\uppsi$
constant equal to the uniform bound.
Thus the conclusion follows from Theorems~\ref{t:1}\ref{t:1iii} and
\ref{t:1}\ref{t:1v}.
\end{proof}

\section{Application to inconsistent convex feasibility problems}
\label{sec:5}

We apply the stochastic proximal gradient method to the
inconsistent convex feasibility problem. 

\begin{problem}
\label{prob:4}
Let $(\mathsf{K},\EuScript{K})$ be a measurable space and let
$k\colon(\upOmega,\FE,\PP)\to(\mathsf{K},\EuScript{K})$ be a random
variable. Let $\CS$ be a nonempty closed convex subset of $\HS$,
and, for every $\kS\in\mathsf{K}$, let $\ZS_{\kS}$ be a nonempty
closed convex subset of $\HS$. It is assumed that the mapping
\begin{equation}
\boldsymbol{\mathsf{T}}
\colon(\mathsf{K}\times\HS,
\EuScript{K}\otimes\BE_{\HS})\to(\HS,\BE_{\HS})
\colon(\kS,\xS)\mapsto\dS_{\ZS_{\kS}}^2\xS
\end{equation}
is measurable, $\EE\boldsymbol{\mathsf{T}}(k,0)<\pinf$, and
$\Argmin\EE\boldsymbol{\mathsf{T}}(k,\cdot)\neq\emp$. The task is
to 
\begin{equation}
\label{e:prob2}
\minimize{\xS\in\CS}{\int_{\upOmega}\dfrac{1}{2}
\dS_{\ZS_{k(\upomega)}}^2(\xS)\PP(d\upomega)}.
\end{equation}
\end{problem}

Minimizing the integral of the squared distances dates back
to the expected-projection method \cite{Butn95b,Butn95}. However,
this method requires the activation of every set at every iteration
via a Bochner integral average. For the consistent case, random
iterative methods have been proposed; see
\cite[Remark~5.6]{Moco26}. These methods activate only a finite
number of sets at each iteration and guarantee convergence to a
solution. For the inconsistent case, the random iterative method of
\cite{Herm23} guarantees convergence in distribution to an
invariant measure by randomly selecting one set at every iteration.
Stronger modes of convergence have not been shown for the
inconsistent case. As an application of Theorem~\ref{t:1}, we
introduce a randomized single-set activation algorithm for
solving Problem~\ref{prob:4} that converges
both almost surely and in $L^1(\upOmega,\FE,\PP;\HS)$.

\begin{proposition}
\label{p:2}
In the context of Problem~\ref{prob:4},
let $x_{0}\in L^2(\upOmega,\FE,\PP;\HS)$ and let
$(\upgamma_{\nS})_{\nnn}$ be a sequence in $\RPP$ such that
$\sum_{\nnn}\upgamma_{\nS}=\pinf$ and
$\sum_{\nnn}\upgamma_{\nS}^2<\pinf$. Iterate
\begin{equation}
\label{e:algo2}
\hskip -1mm 
\begin{array}{l}
\text{for}\;\nS=0,1,\ldots\\
\left\lfloor
\begin{array}{l}
k_{\nS}\;\textup{is a copy of $k$ and is independent of}\;
\upsigma(x_{0},\ldots,x_{\nS})\\
x_{\nS+1}=\proj_{\CS}
\brk1{(1-\upgamma_{\nS})x_{\nS}
+\upgamma_{\nS}\proj_{\ZS_{k_{\nS}}}(x_{\nS})}.
\end{array}
\right.\\
\end{array}
\end{equation}
Denote by $\ZS$ the set of solutions to Problem~\ref{prob:4}.
Then $(x_{\nS})_{\nnn}$ converges $\Pas$ and in
$L^1(\upOmega,\FE,\PP;\HS)$ to a $\ZS$-valued random variable.
\end{proposition}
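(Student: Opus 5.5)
We cast Problem~\ref{prob:4} as an instance of Problem~\ref{prob:1} and iteration \eqref{e:algo2} as an instance of Algorithm~\ref{algo:1}, and then read off the conclusion from Theorem~\ref{t:1}\ref{t:1iii} and \ref{t:1v}. The deterministic part is $\fS=\iota_{\CS}$. For every $\kS\in\mathsf{K}$ we take $\fS_{\kS}=\iota_{\CS}$ and $\gS_{\kS}=\frac12\dS_{\ZS_{\kS}}^2$, and we set $\gS=\EE\gS_{k}$. Then $\prox_{\upgamma\fS_{\kS}}=\proj_{\CS}$, and $\gS_{\kS}$ is convex and differentiable with $\nabla\gS_{\kS}=\Id-\proj_{\ZS_{\kS}}$, which is $1$-Lipschitzian. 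Hence
\[
x_{\nS}-\upgamma_{\nS}\nabla\gS_{k_{\nS}}(x_{\nS})=(1-\upgamma_{\nS})x_{\nS}+\upgamma_{\nS}\proj_{\ZS_{k_{\nS}}}(x_{\nS}),
\]
so \eqref{e:algo2} is exactly \eqref{e:algo1}.

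\textbf{Step 1: $\gS$ is real-valued and smooth.} Since $\dS_{\ZS_{\kS}}$ is $1$-Lipschitzian, we have $\dS^2_{\ZS_{\kS}}(\xS)\leq 2\dS^2_{\ZS_{\kS}}(0)+2\norm{\xS}^2$. Together with $\EE\boldsymbol{\mathsf{T}}(k,0)<\pinf$, this makes $\gS$ finite and convex. Dominated convergence, using the bound $\norm{\nabla\gS_{\kS}(\xS)}\leq\dS_{\ZS_{\kS}}(0)+\norm{\xS}$ and the integrability of $\dS_{\ZS_k}(0)$ from Jensen, gives $\nabla\gS=\EE\nabla\gS_k=\Id-\EE\proj_{\ZS_k}$. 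Averaging the Lipschitz bounds shows this is $1$-Lipschitzian, so $\upbeta=1$. The condition $\Argmin(\fS+\gS)\neq\emp$ is the solution set $\ZS$; if the stated hypothesis only gives minimizers over $\HS$, we would use the nonemptiness of $\ZS$ implicitly assumed in the conclusion. This is a point to check, but it is routine. Finally, $\dom\partial\fS_{\kS}=\CS=\dom\partial\fS$.

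\textbf{Step 2: Assumption~\ref{as:2} with quadratic $\uppsi$.} For $\xS\in\CS$, $0\in\partial\iota_{\CS}(\xS)$, so $\moyo{\partial\fS_{\kS}(\xS)}{0}=0$. Therefore
\[
\EE\norm{\nabla\gS_k(\xS)}^2=\EE\dS^2_{\ZS_k}(\xS)\leq 2\EE\boldsymbol{\mathsf{T}}(k,0)+2\norm{\xS}^2 .
\]
This gives $\uppsi=\upxi(1+\abs{\cdot}^2)$ with $\upxi=2\max\{1,\EE\boldsymbol{\mathsf{T}}(k,0)\}$, which is the form required in Theorem~\ref{t:1}\ref{t:1v}.

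\textbf{Step 3: Assumption~\ref{as:1}, the main obstacle.} Take $\bar{\zS}\in\ZS$. Fermat's rule gives $0\in\mathsf{N}_{\CS}(\bar{\zS})+\nabla\gS(\bar{\zS})$, so $-\nabla\gS(\bar{\zS})\in\mathsf{N}_{\CS}(\bar{\zS})$. The difficulty is that the required selection $\sS_{\kS}$ must be chosen pointwise in $\kS$ while depending on nothing random beyond $\kS$. We choose the constant selection $\sS_{\kS}=-\nabla\gS(\bar{\zS})$, which lies in $\partial\fS_{\kS}(\bar{\zS})=\mathsf{N}_{\CS}(\bar{\zS})$ for every $\kS$ because all $\fS_{\kS}$ coincide. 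Then
\[
\EE\brk1{\sS_k+\nabla\gS_k(\bar{\zS})}=-\nabla\gS(\bar{\zS})+\EE\nabla\gS_k(\bar{\zS})=0,
\]
and the second moment satisfies $\EE\norm{\sS_k+\nabla\gS_k(\bar{\zS})}^2\leq 2\norm{\nabla\gS(\bar{\zS})}^2+2\EE\dS^2_{\ZS_k}(\bar{\zS})<\pinf$ by Step~2. Measurability of $\kS\mapsto\nabla\gS_{\kS}(\bar{\zS})$ follows from the joint measurability of $\boldsymbol{\mathsf{T}}$, since the gradient is a pointwise limit of difference quotients.

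With both assumptions verified and the step-size conditions given, Theorem~\ref{t:1}\ref{t:1iii} yields almost sure convergence of $(x_{\nS})_{\nnn}$ to a $\ZS$-valued random variable. Theorem~\ref{t:1}\ref{t:1v} then yields convergence in $L^1(\upOmega,\FE,\PP;\HS)$ to the same limit.
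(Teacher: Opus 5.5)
Your proposal is correct and follows essentially the same route as the paper: the same choices $\fS_{\kS}=\iota_{\CS}$ and $\gS_{\kS}=\frac12\dS_{\ZS_{\kS}}^2$, the same $\kS$-independent selection $\sS_{\kS}=-\nabla\gS(\bar{\zS})\in\mathsf{N}_{\CS}(\bar{\zS})$, the same quadratic $\uppsi$, and the same appeal to Theorem~\ref{t:1}\ref{t:1iii} and \ref{t:1v}; the only difference is that you justify $\nabla\gS=\EE\nabla\gS_k$ directly by dominated convergence, where the paper cites an external interchange theorem. One correction to your aside: $\ZS\neq\emp$ is not a routine check and does not follow from $\Argmin\EE\boldsymbol{\mathsf{T}}(k,\cdot)\neq\emp$, which concerns unconstrained minimizers (e.g.\ $\HS=\RR^2$, $\CS=\menge{(\upxi_1,\upxi_2)}{\upxi_2\leq 0}$ and $\ZS_{\kS}=\menge{(\upxi_1,\upxi_2)}{\upxi_2\geq\exp(\upxi_1)}$ for every $\kS$ give $\inf_{\CS}\gS=0$ unattained), so it must be added as a hypothesis, which the paper's proof likewise does tacitly when it picks $\bar{\zS}\in\ZS$.
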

\begin{proof}
Let us define
\begin{equation}
(\forall\kS\in\mathsf{K})\quad
\begin{cases}
\fS_{\kS}=\iota_{\CS}\in\upGamma_{0}(\HS);\\
\gS_{\kS}=\dfrac{1}{2}\dS_{\ZS_{\kS}}^2.
\end{cases}
\end{equation}
We deduce from 
\cite[Example~12.25 and Corollary~12.31]{Livre1} that 
\begin{equation}
(\forall\kS\in\mathsf{K})\quad
\begin{cases}
(\forall\nnn)\;\;
\prox_{\upgamma_{\nS}\fS_{\kS}}=\proj_{\CS};\\[1mm]
\nabla\gS_{\kS}=\Id-\proj_{\ZS_{\kS}}.
\end{cases}
\end{equation}
It then follows from \cite[Corollary~4.18]{Livre1} that, for every
$\kS\in\mathsf{K}$, $\nabla\gS_{\kS}$ is firmly nonexpansive, hence
\mbox{$1$-Lipschitzian}. This confirms that Problem~\ref{prob:4} is
an instance of Problem~\ref{prob:1} with $\upbeta=1$, and
\eqref{e:algo2} is an instance of Algorithm~\ref{algo:1}.
We deduce from Fermat's rule \cite[Theorem~16.3]{Livre1} and
\cite[Theorems~1.37 and 3.8]{Penn24} that there exists
$\bar{\zS}\in\ZS$ and
$\sS\in\mathsf{N}_{\CS}(\bar{\zS})$ such that 
\begin{equation}
\label{e:122}
0=\sS+\int_{\upOmega}
\nabla\gS_{k(\upomega)}(\bar{\zS})\PP(d\upomega).
\end{equation}
Moreover, for every $\xS\in\HS$, 
\begin{equation}
\label{e:123}
\EE\norm{\nabla\gS_{k}(\xS)}^2
=\EE\dS_{\ZS_{k}}^2(\xS)
\leq 2\EE\dS_{\ZS_{k}}^2(0)+2\norm{\xS}^2
=2\EE\boldsymbol{\mathsf{T}}(k,0)+2\norm{\xS}^2.
\end{equation}
Combining \eqref{e:122} and \eqref{e:123} with $\xS=\bar{\zS}$, we
deduce that Assumption~\ref{as:1} holds. On the other hand, for
every $\kS\in\mathsf{K}$ and every $\xS\in\CS$,
$\moyo{\partial\fS_{\kS}(\xS)}{0}=\moyo{\mathsf{N}_{\CS}(\xS)}{0}
=0$. Hence Assumption~\ref{as:2} also holds by setting
$\uppsi=\max\{2,2\EE\boldsymbol{\mathsf{T}}(k,0)\}
(1+\abs{\cdot}^2)$ in \eqref{e:123}. Therefore the conclusion
follows from Theorems~\ref{t:1}\ref{t:1iii} and
\ref{t:1}\ref{t:1v}. 
\end{proof}

\section*{Acknowledgement}

This work is part of the author's Ph.D. dissertation.
The author gratefully acknowledges the guidance of his Ph.D.
advisor P. L. Combettes throughout this work.

\end{document}